\theoremstyle{definition}
\newtheorem{theorem}{Theorem}[section]    
\newtheorem{lemma}[theorem]{Lemma}
\newtheorem{proposition}[theorem]{Proposition}
\newtheorem{problem}[theorem]{Problem}
\newtheorem{corollary}[theorem]{Corollary}
\newtheorem{defn}[theorem]{Definition}
\newtheorem{remark}[theorem]{Remark}
\newtheorem{example}[theorem]{Example}
\def\Fq{{\mathbb F}_q}
\newcommand{\gauss}[2]{\genfrac{[}{]}{0pt}{}{#1}{#2}_q}
\def\imod#1{\allowbreak\mkern10mu({\operator@font mod}\,\,#1)} 
\title{Enumeration of splitting subsets of endofunctions on finite sets} 
\author{Divya Aggarwal} 
\address{Indraprastha Institute of Information Technology Delhi (IIIT-Delhi), New Delhi 110020, India.}
\email{divyaa@iiitd.ac.in} 
\keywords{splitting subset, endofunction, cycle, tree, cyclic sieving phenomenon, invariant subset, generating function, roots of unity, enumeration, $q$-analogue}  
\subjclass[2020]{05A15, 05C30, 68R10} 
\begin{document}
\begin{abstract}
Let $d$ and $n$ be positive integers such that $d|n$. Let $[n]=\{1,2,\ldots,n\}$ and $T$ be an endofunction on $[n]$. A subset $W$ of $[n]$ of cardinality $n/d$ is said to be $d$-splitting if $W \cup TW \cup \cdots \cup T^{d-1}W =[n]$. Let $\sigma(d;T)$ denote the number of $d$-splitting subsets. If $\sigma(2;T)>0$, then we show that $\sigma(2;T)=g_T(-1)$, where $g_T(t)$ is the generating function for the number of $T$-invariant subsets of $[n]$. It is interesting to note that substituting a root of unity into a polynomial with integer coefficients has an enumerative meaning. 
More generally, let $g_T(t_1,\ldots,t_d)$ be the generating function for the number of $d$-flags of $T$-invariant subsets. We prove for certain endofunctions $T$, if 
$\sigma(d;T)>0$, then $\sigma(d;T)=g_T(\zeta,\zeta^2,\ldots,\zeta^d)$, where $\zeta$ is a primitive $d^{th}$ root of unity.
\end{abstract} 

\maketitle
\tableofcontents 

\section{Introduction}
Throughout the paper, let $d$, $k$, and $N$ denote positive integers. Let $[N]=\{1,\ldots,N\}$ and $n$ be a positive integer such that $d|n$. Let $\zeta$ be a primitive $d^{th}$ root of unity. Unless otherwise stated, $T$ will always denote an endofunction on $[N]$. 

We begin with the following definition.
\begin{defn}
Let $d$ and $n$ be positive integers such that $d|n$. Let $T$ be an endofunction on $[n]$. A subset $W$ of $[n]$ of cardinality $n/d$ is said to be $d$-splitting if
$$
W \cup TW \cup \cdots \cup T^{d-1}W=[n],
$$
where $T^i$ denotes the $i$-fold composition of $T$.
\end{defn}
We denote by $\sigma(d;T)$ the number of $d$-splitting subsets for the endofunction $T$.
\begin{problem}
\label{prob1}
For a given endofunction $T$, what is $\sigma(d;T)$?
\end{problem}


The $q$-analogue of this is a well-studied open problem, but it seems that Problem \ref{prob1} has not been studied particularly in the literature yet. The $q$-analogue of $[n]$ is the $n$-dimensional vector space over the finite field $\Fq$, while the $q$-analogue of an endofunction on $[n]$ is a linear operator on the vector space of dimension $n$ over $\Fq$ \cite[p. 89]{MR2868112}. The problem translates over the finite fields as follows: Let $V$ be a $dm$-dimensional vector space over the finite field $\Fq$ and let $T$ be a linear operator on $V$. An $m$-dimensional subspace $W$ of $V$ is said to be $T$-splitting if
$$
W+TW+\cdots+T^{d-1}W =V.
$$
Let $\sigma_q(d;T)$ denote the number of $m$-dimensional $T$-splitting subspaces. Then for an arbitrary assignment of the operator $T$, determination of $\sigma_q(d;T)$ is an open problem \cite{MR2961399}. An explicit formula for $\sigma_q(d;T)$ is known when $T$ has an irreducible characteristic polynomial \cite{MR4263652, MR3093853}, is regular nilpotent \cite{FFA1}, is regular diagonalizable \cite{prasad2021set, MR4490876}, or when the invariant factors of $T$ satisfy certain degree conditions \cite{FFA2}.

The case $d=2$ is of particular interest. A complete solution for $\sigma_q(2;T)$ for an arbitrary operator $T$ is recently given by Prasad and Ram \cite{prasad2023splitting}. 

Determining $\sigma(2;T)$ is essentially counting the number of subsets $W$ of $[2m]$, of cardinality $m$, such that $W$ is mapped to its complement under $T$. We answer this problem as follows. A subset $U$ of $[n]$ is said to be $T$-invariant if $TU \subseteq U$. Let $g_T(t)$ denote the generating function for the number of $T$-invariant subsets, i.e.
$$
g_T(t)=\sum_{i=0}^n a_i t^i,
$$
where $a_i$ is the number of $T$-invariant subsets of cardinality $i$. We prove that if $\sigma(2;T)>0$, then
\begin{align}
\label{d_is_2}
\sigma(2;T) = g_T(-1).
\end{align}

Note that $-1$ is the primitive second root of unity, and substituting the second root of unity into a polynomial with integer coefficients counts the enumerative measure $\sigma(2;T)$. 
Cyclic Sieving Phenomenon (CSP) is a similar phenomenon studied by Riener, Stanton and White \cite{MR3156682}. Cyclic sieving is a phenomenon by which evaluating a generating function for a finite set at the roots of unity counts symmetry classes of objects acted on by a cyclic group. It generalizes Stembridge's $q=-1$ phenomenon \cite{MR1262215, MR1297179, MR1387685}. Let $C$ be a cyclic group generated by an element $c$ of order $n$. Suppose $C$ acts on a set $X$. Let $X(q)$ be a polynomial with integer coefficients. Then the triple $(X,X(q),C)$ is said to exhibit the cyclic sieving phenomenon if, for all integers $d$, the value $X(e^{2\pi id/n})$ is the number of elements fixed by $c^d$. 
We refer to the survey article of Sagan \cite{MR2866734} for more on this topic.



For a general $d$, we show in section \ref{sec 3} that when the endofunction $T$ is a cycle or a chain, then $\sigma(d;T)=g_T(\zeta, \zeta^2,\ldots,\zeta^d)$, where $\zeta$ is a primitive $d^{th}$ root of unity and $g_T(t_1,\ldots,t_d)$ is the generating function for the number of $d$-flags of $T$-invariant subsets.
A $d$-flag of $T$-invariant subsets is an increasing sequence of subsets 
$\emptyset=U_0 \subseteq U_1 \subseteq \cdots \subseteq U_{d-1} \subseteq U_d=[n]$, where each $U_i$ is $T$-invariant (i.e. $TU_i \subseteq U_i$). 
We write the generating function for the number of $d$-flags of $T$-invariant subsets as follows:
$$
g_T(t)=g_T(t_1,\ldots,t_{d})=\sum_{J} a_J t^J,
$$ 
where the summation index $J$ runs over all d-tuples of non-negative integers and $t^J$ means ${t_1}^{j_1} {t_2}^{j_2} \ldots {t_d}^{j_d}$. Here $a_J$ is the number of flags of $T$-invariant subsets $\emptyset=U_0 \subseteq U_1 \subseteq \cdots \subseteq U_{d-1}\subseteq U_d=[n]$ such that 
$$
|U_i|=j_1+\cdots+j_i ~~ \forall ~ 1 \leq i \leq d.
$$
We prove that if $T$ is a tree (see section \ref{sec 2} for the definition) and $\sigma(d;T)>0$, then 
$
\sigma(d;T)=g_T(\zeta, \zeta^2,\ldots,\zeta^d)
$.

In Section \ref{sec 4}, we extend our result to endofunctions that satisfy certain structure criteria.
More precisely, let $T$ be an endofunction such that:\\
{\it I.} $T$ has a central cycle consisting of $ds$ nodes $(s \geq 0)$ and $k$ trees attached to the nodes of the cycle such that each attached tree has a $d$-splitting subset.\\
{\it II.} $T$ has a central cycle consisting of $ds+1$ nodes and $T_1, \ldots, T_k$ are $k$ trees attached to the nodes of the cycle such that $T_1$, together with its root node on the cycle, has a $d$-splitting subset and each of $T_2, \ldots, T_k$ has a $d$-splitting subset.\\
If $T$ is either of Type {\it I} or of Type {\it II}, then $$\sigma(d;T)=g_T(\zeta, \zeta^2,\ldots,\zeta^d).$$

\section{The case $d=2$}
\label{sec 2}
We begin by defining the structures chains, cycles, and trees. A chain on $[N]$ is defined as $C: a_1 \rightarrow a_2 \rightarrow \cdots \rightarrow a_{N-1} \rightarrow a_N$, where each $a_i \in [N]$ and $a_i \neq a_j$ when $i \neq j$. A cycle on $[N]$ is a permutation $P$ consisting of precisely one cycle when $P$ is written as a product of disjoint cycles. An example of a cycle is shown in Fig. \ref{fig_cycle}. A tree is an acyclic-connected simple graph. A directed tree is a directed acyclic graph whose underlying graph is a tree. For our purposes, we will only be considering directed trees, and with a slight abuse of notation, we will call directed trees as trees.

\begin{defn}
A rooted tree $T$ is a tree with a distinguished node, called the root node, $R$, such that all the edges point towards the root (see Fig. \ref{rooted tree}). The trees $T_1, \ldots, T_k$ in Fig. \ref{rooted tree} are called subtrees of $T$.
\begin{figure}[h]
 \caption{A rooted tree T.}
      \label{rooted tree}
  \begin{center}
    \begin{minipage}{\textwidth}
      \centering
      \begin{tikzpicture}
[scale=1,every node/.style={circle,fill=black},inner sep=2.5pt, minimum size=6pt]
        \node [label=below:$R$] (1) at (0,0) {};
        \node[label=right:$T_2$] (2) at (-1,1) {};
        \node[label=right:$T_{k-1}$] (3) at (1,1) {};
        \node[label=right:$T_1$] (4) at (-2,1) {};
        \node[label=right:$T_k$] (5) at (2.5,1) {};
\node[style={circle,fill=white}] (6) at (-0.1,1) {};
\node[style={circle,fill=white}] (7) at (0.7,1) {};
        \draw[-{Stealth[slant=0]}]
        (2) to  (1) ;
        \draw[-{Stealth[slant=0]}]
        (3) to  (1);
  \draw[-{Stealth[slant=0]}]
        (4) to  (1) ;
  \draw[-{Stealth[slant=0]}]
        (5) to  (1) ;
\draw[dashed]
        (6) to  (7) ;
      \end{tikzpicture}
\end{minipage}
  \end{center}
\end{figure}
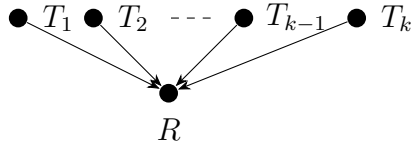
\end{defn}
We adopt the following convention.\\
Let $C$ be a chain with $N$ nodes. If $C$ does not feed into a cycle, we assume that the last node of $C$ goes to itself, thereby making $C$ an endofunction on $[N]$. Figure \ref{fig_chain} is an example of a chain on $[4]$.
Likewise, let $T$ be a rooted tree with the root node $R$. If $R$ does not feed into a cycle, we assume that $R$ is mapped to itself so that $T$ becomes an endofunction.

Let $T$ be an endofunction on $[N]$. Let us recall the $T$-invariant subsets of $[N]$.
\begin{defn}
A subset $U$ of $[N]$ is said to be $T$-invariant if $TU \subseteq U$, where $TU$ denotes the image of $U$ under $T$. We denote by $g_T(t)$, the generating function for the number of $T$-invariant subsets, i.e.
$$
g_T(t)=\sum_{i=0}^N a_i t^i,
$$
where $a_i$ is the number of $T$-invariant subsets of cardinality $i$.
\end{defn}
Note that $g_T(t)$ is a polynomial with integer coefficients. 
The following theorem is the main result of this section.
\begin{theorem}
\label{main}
Let $T$ be an endofunction on $[2m]$ for which $\sigma(2;T)>0$. Then
$$
\sigma(2;T)=g_T(-1).
$$
\end{theorem}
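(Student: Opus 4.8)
The plan is to reduce to connected functional graphs and then compute both sides explicitly, treating $g_T(-1)$ as a signed enumeration of invariant subsets and $\sigma(2;T)$ as an enumeration of forced two-colourings. First I would observe that both quantities are multiplicative over the weakly connected components of the functional graph of $T$. If $T$ is the disjoint union of the endofunctions $T|_C$ on the components $C$, then a subset is $T$-invariant iff its intersection with each $C$ is $T|_C$-invariant, so $g_T(t)=\prod_C g_{T|_C}(t)$ and hence $g_T(-1)=\prod_C g_{T|_C}(-1)$. For splitting, a size count ($|W\cup TW|=2m$, $|W|=m$, $|TW|\leq m$) shows $T$ maps $W$ bijectively onto $W^c$; since $T$ preserves each component, $W\cap C$ is a $2$-splitting subset of $C$ for every $C$, and conversely component-wise splittings glue to a global one. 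Thus $\sigma(2;T)=\prod_C \sigma(2;T|_C)$ when every $|C|$ is even and $\sigma(2;T)=0$ otherwise, so the hypothesis $\sigma(2;T)>0$ forces every component to have even order and a splitting, and it suffices to treat a connected $T$ with $\sigma(2;T)>0$.

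A connected functional graph is a single cycle $v_0\to v_1\to\cdots\to v_{\ell-1}\to v_0$ with rooted trees attached at the cycle nodes (edges oriented toward the cycle); write $h(S):=g_S(-1)$ for a subtree $S$ hanging off the cycle. Because the cycle is one orbit, a $T$-invariant $U$ satisfies $U\cap\{v_0,\dots,v_{\ell-1}\}\in\{\emptyset,\text{whole cycle}\}$; if empty then $U=\emptyset$, and if the whole cycle then each hanging subtree independently carries an arbitrary invariant subset. Splitting $\sum_U(-1)^{|U|}$ accordingly gives $g_T(-1)=1+(-1)^\ell\prod_S h(S)$ over all hanging subtrees $S$. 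For a single rooted tree the recursion $g_S(t)=1+t\prod_c g_{S_c}(t)$ over the children subtrees $S_c$ of the root yields at $t=-1$ the recursion $h(S)=1-\prod_c h(S_c)$ with $h(\text{leaf})=0$.

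Next I would describe $\sigma$ of the connected graph through the forced two-colouring black $=W$, white $=W^c$, in which $T$ sends black to white bijectively. Leaves (in-degree $0$ nodes) must be black, a black node has only white preimages, and every white node has exactly one black preimage. Propagating these rules shows each hanging subtree admits at most one colouring with black root and at most one with white root; call $S$ \emph{black-admissible} or \emph{white-admissible} accordingly. A short induction on the recursion $h(S)=1-\prod_c h(S_c)$ proves these are mutually exclusive and that a white-admissible $S$ has $h(S)=1$ while a black-admissible $S$ has $h(S)=0$. Since $\sigma>0$ excludes subtrees admitting neither colour, $\prod_S h(S)$ equals $1$ when every hanging subtree is white-admissible and $0$ when some hanging subtree is black-admissible. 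It remains to count admissible colourings of the cycle: letting $y_i\in\{0,1\}$ record whether $v_i$ is white and $\beta_i$ the number of black-admissible subtrees at $v_i$, the conditions become $y_{i-1}y_i=\beta_i$ together with the no-two-adjacent-blacks rule, a constraint-satisfaction problem on the cycle.

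Solving this I would split into two cases. If all $\beta_i=0$ the constraints say $y$ is a proper $2$-colouring of the cycle, giving $\sigma=2$ for $\ell$ even and $0$ for $\ell$ odd, matching $g_T(-1)=1+(-1)^\ell$. If some $\beta_i=1$, one forced white node propagates uniquely around the cycle, so (consistency being guaranteed by $\sigma>0$) there is exactly one colouring and $\sigma=1$, matching $g_T(-1)=1$. In both cases $\sigma(2;T)=g_T(-1)$, completing the connected case and hence the theorem. The main obstacle is the interplay between the tree branching and the cyclic constraint: the signed sum $g_T(-1)$ is \emph{not} equal to $\sigma(2;T)$ for a general $T$ (a root with two pendant leaves has $g_T(-1)=1$ but $\sigma(2;T)=0$), so the crux is to show that the hypothesis $\sigma(2;T)>0$ is exactly what forces every hanging subtree to be black- or white-admissible and every $\beta_i\leq 1$, which is what makes the alternating sum collapse onto the splitting count.
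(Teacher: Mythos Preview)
Your proof is correct and follows the same architecture as the paper's: reduce to connected components, write $g_T(-1)=1+(-1)^{\ell}\prod_S h(S)$ for a connected piece, classify each hanging subtree according to whether it admits a $2$-splitting on its own or only after adjoining the cycle node, show that $h(S)=1$ in the first case and $h(S)=0$ in the second, and then split into the two cases ``all $h(S)=1$'' (giving $\sigma=2$, $\ell$ even) versus ``some $h(S)=0$'' (giving $\sigma=1$). The one genuine difference is in how you establish $h(S)\in\{0,1\}$: the paper proves $g_T(-1)=1$ for a splittable tree by peeling off a chain of even length ending at a branching node and inducting on the number of chains (its Lemma~2.12), whereas you use the structural recursion $h(S)=1-\prod_c h(S_c)$ together with the black/white-admissibility dichotomy directly on the subtree decomposition. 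Your route avoids the chain decomposition entirely and makes the mutual exclusivity of the two admissibility types explicit; it also gives a cleaner treatment of the cycle (your $y_{i-1}y_i=\beta_i$ formulation makes precise what the paper only sketches as ``such trees occur at even gaps''). Conversely, the paper's chain induction is what later generalizes to arbitrary $d$ via complete homogeneous symmetric polynomials, so its extra machinery is not wasted.
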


We use the following result from Bergeron, Labelle, and Leroux \cite[p. 41]{MR1629341} to prove Theorem \ref{main}.
Every endofunction is a permutation of disjoint rooted trees. Figure \ref{endofunction} shows that an endofunction $T$ can naturally be identified with a permutation of disjoint rooted trees, where each rooted tree is shown in a different colour. The nodes on the cycle serve as the roots of the attached trees.

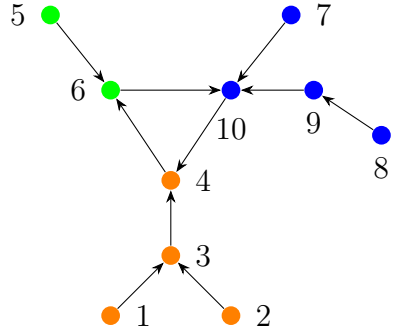
\begin{figure}[h]
 \caption{An endofunction as a permutation of disjoint rooted trees.}
      \label{endofunction}
  \begin{center}
    \begin{minipage}{\textwidth}
      \centering
      \begin{tikzpicture}
[scale=1,every node/.style={circle,fill=black,,inner sep=2.5pt, minimum size=6pt}]
        \node[label=right:$4$,style={circle,fill=orange}] (4) at (0,-0.2) {};
      \node[label=right:$3$,style={circle,fill=orange}] (3) at (0,-1.2) {};
  \node[label=right:$1$,style={circle,fill=orange}] (1) at (-0.8,-2) {};
  \node[label=right:$2$,style={circle,fill=orange}] (2) at (0.8,-2) {};
        \node[label=left:$6$,style={circle,fill=green}] (6) at (-0.8,1) {};
        \node[label=below:$10$,style={circle,fill=blue}] (10) at (0.8,1) {};
  \node[label=right:$7$,style={circle,fill=blue}] (7) at (1.6,2) {};
  \node[label=left:$5$,style={circle,fill=green}] (5) at (-1.6,2) {};
     \node[label=below:$9$,style={circle,fill=blue}] (9) at (1.9,1) {};
\node[label=below:$8$,style={circle,fill=blue}] (8) at (2.8,0.4) {};
        \draw[-{Stealth[slant=0]}]
        (6) to  (10) ;
        \draw[-{Stealth[slant=0]}]
        (10) to  (4);
  \draw[-{Stealth[slant=0]}]
        (3) to  (4);
   \draw[-{Stealth[slant=0]}]
        (4) to  (6);
\draw[-{Stealth[slant=0]}]
        (8) to  (9);
\draw[-{Stealth[slant=0]}]
        (9) to  (10);
\draw[-{Stealth[slant=0]}]
        (7) to  (10);
\draw[-{Stealth[slant=0]}]
        (1) to  (3);
\draw[-{Stealth[slant=0]}]
        (2) to  (3);
\draw[-{Stealth[slant=0]}]
        (5) to  (6);
      \end{tikzpicture}
\end{minipage}
  \end{center}
\end{figure}

The following results depict the generating functions for the number of $T$-invariant subsets when $T$ is a cycle or a chain.
\begin{proposition}
Let $T$ be a cycle on $[N]$. Then
$$
g_T(t)=1+t^N.
$$
\end{proposition}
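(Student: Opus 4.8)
The plan is to exploit the fact that a cycle is a bijection, so that the one-sided condition $TU\subseteq U$ collapses to genuine invariance $TU=U$, and then to use the single-orbit structure of an $N$-cycle to pin down exactly which subsets can be invariant.

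First I would observe that since $T$ is a cycle on $[N]$, it is a permutation and hence a bijection of the finite set $[N]$. Consequently $|TU|=|U|$ for every subset $U$. If $U$ is $T$-invariant, then $TU\subseteq U$, and comparing cardinalities on the finite set $[N]$ forces $TU=U$. Thus every $T$-invariant subset is a union of $T$-orbits. This is the one place where the bijective (permutation) nature of a cycle is essential, and it is what distinguishes this case from a general endofunction, where $TU\subseteq U$ need not imply $TU=U$.

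Next I would invoke the hypothesis that $T$ consists of a single $N$-cycle, so that $[N]$ is one orbit: starting from any $a\in[N]$, the iterates $a,Ta,\dots,T^{N-1}a$ exhaust $[N]$. Hence if $U$ is nonempty and $T$-invariant, then $U$ contains some $a$ and therefore contains $a,Ta,\dots,T^{N-1}a=[N]$, so $U=[N]$. Together with the (vacuously invariant) empty set, this shows the only $T$-invariant subsets are $\emptyset$ and $[N]$. Reading off the generating function, we get $a_0=a_N=1$ and $a_i=0$ for $0<i<N$, so $g_T(t)=1+t^N$. The argument is entirely elementary, the only subtle step being the cardinality comparison that upgrades $TU\subseteq U$ to $TU=U$.
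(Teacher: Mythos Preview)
Your proof is correct and takes essentially the same approach as the paper: both arguments identify $\emptyset$ and $[N]$ as the only $T$-invariant subsets. The paper simply asserts this in one line, while you spell out the justification via the bijectivity of $T$ and the single-orbit structure; your added detail is a faithful expansion of what the paper leaves implicit.
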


\begin{proof}
Since $T$ is a cycle, there are only two $T$-invariant subsets: the empty set and the whole set $[N]$.
\end{proof}

\begin{proposition}
\label{gf chain}
Let $T$ be a chain on $[N]$. Then
$$
g_T(t)=1+t+t^2+\cdots+t^N.
$$
\end{proposition}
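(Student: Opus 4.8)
The plan is to characterize the $T$-invariant subsets explicitly and then read off their cardinalities. Writing the chain as $a_1 \to a_2 \to \cdots \to a_N$, we have $T(a_i) = a_{i+1}$ for $1 \le i \le N-1$, while by the stated convention $T(a_N) = a_N$. The defining condition $TU \subseteq U$ therefore unwinds to a single implication: whenever $a_i \in U$ with $i < N$, we must also have $a_{i+1} \in U$. The constraint coming from $a_N$ is vacuous, since $T(a_N) = a_N$ already lies in any subset containing $a_N$.

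Next I would argue that this implication forces every nonempty $T$-invariant subset to be a \emph{tail} of the chain. Given a nonempty $T$-invariant $U$, let $j$ be the least index with $a_j \in U$. Iterating the implication yields $a_{j+1}, a_{j+2}, \ldots, a_N \in U$, so $\{a_j, a_{j+1}, \ldots, a_N\} \subseteq U$; by minimality of $j$ none of $a_1, \ldots, a_{j-1}$ belongs to $U$, and hence $U = \{a_j, a_{j+1}, \ldots, a_N\}$. Conversely, each such tail, together with the empty set, is plainly $T$-invariant. Thus the $T$-invariant subsets are exactly $\emptyset$ and the $N$ tails indexed by $1 \le j \le N$.

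Finally I would count by cardinality. The tail beginning at $a_j$ has size $N - j + 1$, so as $j$ ranges over $1, \ldots, N$ these sizes sweep out $N, N-1, \ldots, 1$, while $\emptyset$ contributes size $0$. Hence there is precisely one $T$-invariant subset of each cardinality $i$ with $0 \le i \le N$, giving $a_i = 1$ for all such $i$ and therefore $g_T(t) = 1 + t + t^2 + \cdots + t^N$. There is no real obstacle in this argument; the only point needing care is the convention $T(a_N) = a_N$, which is exactly what makes the invariance condition impose no constraint at the terminal node and so permits the full chain itself to register as the top-dimensional invariant subset.
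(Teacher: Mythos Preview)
Your proof is correct and follows essentially the same approach as the paper: both identify the $T$-invariant subsets as exactly the tails $\{a_j,\ldots,a_N\}$ (plus $\emptyset$), one of each cardinality $0\le i\le N$. Your version simply supplies more detail for the claim the paper leaves as ``easy to see.''
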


\begin{proof}
For each $i~ (0 \leq i \leq N)$, there exists precisely one $T$-invariant subset of cardinality $i$, consisting of the last $i$ nodes of the chain. By the structure of $T$, it is easy to see that no other subset of $[N]$ is $T$-invariant.
\end{proof}

\begin{example}
Let $T$ be given by the following chain (see Figure \ref{fig_chain}). $$T:1\rightarrow 2 \rightarrow 3 \rightarrow 4 \rightarrow 4.$$ 
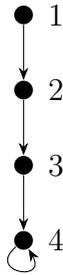
\begin{figure}[h]
\caption{A chain on $[4]$.}
\label{fig_chain}
  \begin{center}
    \begin{minipage}{\textwidth}
      \centering
      \begin{tikzpicture}
        [scale=1,every node/.style={circle,fill=black},inner sep=2.5pt, minimum size=6pt]
        \node[label=right:$4$] (4) at (1,0) {};
        \node[label=right:$3$] (3) at (1,1) {};
        \node[label=right:$2$] (2) at (1,2) {};
        \node[label=right:$1$] (1) at (1,3) {};
        \draw[-{Stealth[slant=0]}]
        (1) to  (2) ;
        \draw[-{Stealth[slant=0]}]
        (2) to  (3);
  \draw[-{Stealth[slant=0]}]
        (3) to  (4);
\draw [-{Stealth[slant=0]}]
(4) to [out=220,in=300,looseness=8] (4);
      \end{tikzpicture}
\end{minipage}
  \end{center}
\end{figure}
Then there exists a unique $T$-invariant subset of cardinality $i ~(0 \leq i \leq 4)$, namely the last $i$ nodes of the chain. So, $g_T(t)=1+t+t^2+t^3+t^4$.
\end{example}
Proposition \ref{gf chain} may be used to recursively obtain the generating function for a tree. 
\begin{lemma}
\label{gf_tree}
Let $T$ be a rooted tree on $[N]$ with root node $R$ and $k$ subtrees $T_1, \ldots, T_k$. Then 
$$
g_T(t)=1+t\prod_{i=1}^k g_{T_i}(t),
$$
where $g_{T_i}(t)$ denotes the generating function for the number of $T_i$ -invariant subsets.
\end{lemma}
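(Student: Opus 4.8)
The plan is to classify the $T$-invariant subsets of $[N]$ according to their interaction with the root $R$ and to set up a bijection with tuples of invariant subsets of the subtrees. Recall that, since all edges point toward $R$, the endofunction $T$ sends every non-root node to its parent and fixes $R$ (by the self-loop convention). Hence a subset $U$ is $T$-invariant precisely when it is closed under passing to parents: if $x \in U$ then $T(x) \in U$. The empty set is trivially $T$-invariant and accounts for the constant term $1$ in $g_T(t)$, so it remains to enumerate the nonempty ones.

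First I would observe that every nonempty $T$-invariant subset contains $R$: starting from any $x \in U$ and iterating $T$, the parent chain $x, T(x), T^2(x), \ldots$ eventually reaches $R$, and each term lies in $U$ by invariance. Next, writing $R_i$ for the root of the subtree $T_i$ (a child of $R$), I would define the map $U \mapsto (U \cap T_1, \ldots, U \cap T_k)$ on nonempty invariant subsets, with candidate inverse $(U_1, \ldots, U_k) \mapsto \{R\} \cup U_1 \cup \cdots \cup U_k$.

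The key step --- and the main point requiring care --- is checking that this is a well-defined bijection onto $k$-tuples in which each $U_i$ is $T_i$-invariant. The subtlety is that the map $T$ restricted to $T_i$ differs from the standalone endofunction on $T_i$ only at the subtree root $R_i$: in the full tree $T(R_i) = R$, whereas in the standalone tree the convention gives $T_i(R_i) = R_i$. I would verify that these two invariance conditions nonetheless coincide for a subset $U_i \subseteq T_i$. Indeed, the requirement $T(R_i) \in U$ is automatic once $R \in U$, while the condition $T_i(R_i) = R_i \in U_i$ is vacuous; for every other node the two maps agree. Thus $U$ is $T$-invariant and contains $R$ if and only if each $U \cap T_i$ is $T_i$-invariant, establishing the bijection.

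Finally I would translate the bijection into generating functions. Since $|U| = 1 + \sum_{i=1}^k |U \cap T_i|$ for a nonempty invariant $U$, summing $t^{|U|}$ over all nonempty invariant subsets factors as $t \prod_{i=1}^k \bigl(\sum_{U_i} t^{|U_i|}\bigr) = t \prod_{i=1}^k g_{T_i}(t)$, where the inner sums range over the $T_i$-invariant subsets. Adding the contribution $1$ of the empty set yields $g_T(t) = 1 + t \prod_{i=1}^k g_{T_i}(t)$. No step is technically hard; the only place demanding attention is reconciling the two notions of invariance at the subtree roots, which is exactly what makes the recursion clean.
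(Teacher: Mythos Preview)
Your proof is correct and follows essentially the same approach as the paper: the empty set contributes the $1$, every nonempty invariant subset must contain $R$, and the remaining nodes split independently among the subtrees. Your version is simply more explicit---you spell out the bijection and verify the compatibility of invariance at the subtree roots---whereas the paper compresses all of this into the phrase ``$T_i$ is independent of $T_j$, for $i \neq j$.''
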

\begin{proof}
As the empty set is always $T$-invariant, we get $t^0$ in the generating function. Since all the nodes of the tree eventually feed into the root node $R$, therefore $R$ must belong to every non-empty $T$-invariant subset. So, we get $t$ times the product of the generating functions for each subtree, as $T_i$ is independent of $T_j$, for $i \neq j$.
\end{proof}
Combining the above results, we obtain the following generating function for the number of $T$-invariant subsets for a general endofunction $T$.

\begin{lemma}
\label{genfn}
Let $T$ be an endofunction on $[N]$ with $s$ connected components. Let the central cycle of $i^{th}$ component of $T$ has $r_i$ nodes, and $T_{i,1},\ldots,T_{i,k_i}$ be $k_i$ trees attached to the cycle of $i^{th}$ component. Then
$$
g_T(t)=\prod_{i=1}^{s} \left(1+t^{r_i}\prod_{j=1}^{k_i}g_{T_{i,j}}(t)\right),
$$
where $g_{T_{i,j}}(t)$ denotes the generating function for the number of $T_{i,j}$-invariant subsets.
\end{lemma}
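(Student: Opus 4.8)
The plan is to prove the formula in two stages: first reduce to a single connected component by a multiplicativity argument, and then compute the generating function of one component directly from its cycle-and-trees structure.

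First I would observe that $g_T$ is multiplicative over the connected components of the functional graph of $T$. Let $C_1,\ldots,C_s$ be the vertex sets of the $s$ components, and let $T^{(i)}$ denote the restriction of $T$ to $C_i$; this is again an endofunction on $C_i$, since each component is forward-closed under $T$. A subset $U\subseteq[N]$ is $T$-invariant if and only if each intersection $U\cap C_i$ is $T^{(i)}$-invariant, because $T$ never moves a vertex out of its own component. As the components partition $[N]$, such a $U$ is obtained by choosing an invariant subset of each $C_i$ independently, and the cardinalities add; translating this into generating functions gives $g_T(t)=\prod_{i=1}^{s} g_{T^{(i)}}(t)$.

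Second, I would compute $g_{T^{(i)}}(t)$ for a single component. By the identification of an endofunction with a permutation of disjoint rooted trees, $C_i$ consists of one central cycle on $r_i$ nodes with the rooted trees $T_{i,1},\ldots,T_{i,k_i}$ attached at the cycle nodes, all edges pointing toward the cycle. The empty set is invariant and accounts for the constant term $1$. For a nonempty $T^{(i)}$-invariant subset $U$, iterating $T$ from any vertex of $U$ eventually lands on the cycle, since every vertex of a functional-graph component feeds into its unique cycle; hence $U$ contains at least one cycle node, and because the cycle is a single $r_i$-cycle and $U$ is invariant, repeated application of $T$ forces all $r_i$ cycle nodes into $U$, contributing the factor $t^{r_i}$. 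Once the whole cycle is included, the remaining freedom lies entirely in the trees: for each $j$, the part of $U$ inside $T_{i,j}$ may be any $T_{i,j}$-invariant subset (its root edge points to a cycle node already in $U$, so invariance inside the tree is the only constraint), and distinct trees are vertex-disjoint, so these choices are independent. Thus the nonempty invariant subsets contribute $t^{r_i}\prod_{j=1}^{k_i}g_{T_{i,j}}(t)$, whence $g_{T^{(i)}}(t)=1+t^{r_i}\prod_{j=1}^{k_i}g_{T_{i,j}}(t)$, and combining the two stages yields the claimed product.

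The main obstacle is the second stage, specifically the clean verification that every nonempty invariant subset of a component must contain the entire central cycle, together with the fact that, conditioned on this, the tree contributions factor independently. Both points rely on the structure of an endofunction as a permutation of disjoint rooted trees and on the uniqueness of the cycle in each component; the first-stage multiplicativity and the independence of the tree factors are then routine, invoking Lemma \ref{gf_tree} only implicitly through the already-defined tree generating functions $g_{T_{i,j}}(t)$.
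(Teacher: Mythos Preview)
Your proof is correct and follows essentially the same approach as the paper: reduce to a single component by multiplicativity, observe that every nonempty invariant subset must contain the entire central cycle, and then factor the remaining choices over the attached trees. The paper's own proof is a one-line version of exactly this argument, citing Lemma~\ref{gf_tree} for the tree factors where you instead spell out the independence directly.
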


\begin{proof}
Since each connected component is independent of the other, the result immediately follows by Lemma \ref{gf_tree}, as all the nodes of the cycle must belong to every non-empty $T$-invariant subset.
\end{proof}
We adopt the following notation for trees.

\begin{defn}
A node $\tau$ of a tree $T$ is said to be a {\it branching node} if there exist nodes $\tau_1$ and $\tau_2$ such that both $\tau_1$ and $\tau_2$ feed into $\tau$ under the action of $T$.
\end{defn}
Figure \ref{branching node} represents the branching node $\tau$.

\begin{figure}[h]
 \caption{Branching node $\tau$}
      \label{branching node}
  \begin{center}
    \begin{minipage}{\textwidth}
      \centering
      \begin{tikzpicture}
[scale=1,every node/.style={circle,fill=black},inner sep=2.5pt, minimum size=6pt]
        \node[label=right:$\tau$] (3) at (0,0) {};
        \node[label=right:$\tau_1$] (1) at (-1,1) {};
        \node[label=right:$\tau_2$] (2) at (1,1) {};
        \draw[-{Stealth[slant=0]}]
        (1) to  (3) ;
        \draw[-{Stealth[slant=0]}]
        (2) to  (3);
      \end{tikzpicture}
\end{minipage}
  \end{center}
\end{figure}
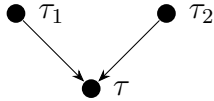

\begin{defn}
Let $T$ be a tree. To obtain the {\it chains of $T$}, apply the following method. Begin with the leaves of $T$ and go up to the first branching node (where the branching node is excluded). Cut off all the chains so obtained to get a forest. Repeat the above procedure with each tree in the forest. In a finite number of steps, we have only the structures of the kind $\tau_1 \rightarrow \tau_2 \rightarrow \cdots \rightarrow \tau_k$. Each such structure is called a chain of the tree $T$.
\end{defn}
Figure \ref{fig_chains} depicts the various chains of a tree, where each chain has been highlighted in a different colour.
\begin{figure}[h]
 \caption{Chains of a tree.\\ $C_1:3; C_2:1\rightarrow 2;  C_3: 5 \rightarrow 6 \rightarrow 7; C_4: 4 \rightarrow 8 \rightarrow 9$}
      \label{fig_chains}
  \begin{center}
    \begin{minipage}{\textwidth}
      \centering
      \begin{tikzpicture}
[scale=1,every node/.style={circle,fill=black},inner sep=2.5pt, minimum size=6pt]
        \node[label=right:$8$,style={circle,fill=orange}] (8) at (0,0) {};
        \node[label=right:$9$,style={circle,fill=orange}] (9) at (0,-1) {};
        \node[label=right:$4$,style={circle,fill=orange}] (4) at (-1,1) {};
        \node[label=right:$7$,style={circle,fill=blue}] (7) at (1,1) {};
\node[label=right:$6$,style={circle,fill=blue}] (6) at (1,2) {};
\node[label=right:$5$,style={circle,fill=blue}] (5) at (1,3) {};
      \node[label=right:$1$,style={circle,fill=green}] (1) at (-1,3) {};
 \node[label=right:$2$,style={circle,fill=green}] (2) at (-1,2) {};
\node[label=right:$3$] (3) at (-2,2) {};
        \draw[-{Stealth[slant=0]}]
        (4) to  (8) ;
        \draw[-{Stealth[slant=0]}]
        (7) to  (8);
  \draw[-{Stealth[slant=0]}]
        (8) to  (9) ;
 \draw[-{Stealth[slant=0]}]
        (5) to  (6) ;
 \draw[-{Stealth[slant=0]}]
        (6) to  (7) ;
 \draw[-{Stealth[slant=0]}]
        (1) to  (2) ;
 \draw[-{Stealth[slant=0]}]
        (2) to  (4) ;
\draw[-{Stealth[slant=0]}]
        (3) to  (4) ;
      \end{tikzpicture}
\end{minipage}
  \end{center}
\end{figure}
To prove Theorem \ref{main}, we require the following lemmas.
\begin{lemma}
Let $T$ be a rooted tree with $k$ subtrees $T_1,\ldots,T_k$.
\label{star}
A $d$-splitting subset for $T$ exists if and only if there exists a unique subtree $T_i$ such that $\widetilde{T_i}( = T_i$ with the root node $R$ (Fig. \ref{tilde})) has a $d$-splitting subset, and for $i \neq j$, $T_j$ has a $d$-splitting subset.
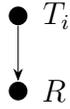
\begin{figure}[h]
 \caption{$\widetilde{T_i} =T_i \rightarrow R$}
      \label{tilde}
  \begin{center}
    \begin{minipage}{\textwidth}
      \centering
      \begin{tikzpicture}
[scale=1,every node/.style={circle,fill=black},inner sep=2.5pt, minimum size=6pt]
        \node [label=right:$R$] (1) at (0,0) {};
        \node[label=right:$T_i$] (2) at (0,1) {};
      
        \draw[-{Stealth[slant=0]}]
        (2) to  (1) ;
 
      \end{tikzpicture}
\end{minipage}
  \end{center}
\end{figure}
\end{lemma}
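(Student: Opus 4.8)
The plan is to prove both implications by reducing the splitting condition to a partition statement and then restricting or assembling splitting subsets subtree by subtree. First I would record the following elementary but crucial reformulation: if $W\subseteq[N]$ has $|W|=N/d$ and $W\cup TW\cup\cdots\cup T^{d-1}W=[N]$, then since each $T^{\ell}W$ has at most $N/d$ elements and there are $d$ of them, the union of size $N$ forces the sets $W,TW,\ldots,T^{d-1}W$ to be pairwise disjoint, to have size exactly $N/d$, and hence each $T^{\ell}$ to be injective on $W$. In other words, a $d$-splitting subset is exactly an $m$-set ($m=N/d$) whose first $d$ iterates partition $[N]$. Because the root $R$ is a fixed point ($TR=R$), if $R\in W$ it would lie in every block, contradicting disjointness; so $R\notin W$, and there is a unique $\ell_0\ge 1$ and a unique $w_0\in W$ with $T^{\ell_0}(w_0)=R$, the uniqueness coming from injectivity of $T^{\ell_0}$ on $W$. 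Let $T_{i_0}$ be the subtree containing $w_0$.

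For the forward direction I would set $W_0=W\cap\widetilde{T_{i_0}}$ and $W_j=W\cap T_j$ for $j\ne i_0$; since $R\notin W$ these partition $W$. The heart of the argument is a covering claim: for any node $u$ of a subtree $T_j$, its covering witness $u=T^{\ell}(w)$ must come from $w$ lying in the same subtree, because under $T$ every orbit travels monotonically up toward $R$, so the orbit of any node meets $\{R\}$ and otherwise stays inside a single subtree. I would then use that the standalone dynamics of $T_j$ (root fixed) and the dynamics induced by $T$ (root sent to $R$) agree along the orbit up to and including the first visit to the subtree's root, which is all that is needed to reach any node $u\ne R$. This yields $\bigcup_{\ell}T_j^{\ell}W_j=T_j$ and, handling $R$ via the orbit of $w_0$, also $\bigcup_{\ell}\widetilde{T_{i_0}}^{\,\ell}W_0=\widetilde{T_{i_0}}$. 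Summing the resulting inequalities $|T_j|\le d|W_j|$ and $|\widetilde{T_{i_0}}|\le d|W_0|$ and comparing with $|W|=N/d$ forces equality throughout, so each $W_j$ and $W_0$ has exactly the right cardinality and is therefore $d$-splitting for $T_j$ and for $\widetilde{T_{i_0}}$ respectively.

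The backward direction reverses this: given $d$-splitting subsets $W_0$ of $\widetilde{T_{i_0}}$ and $W_j$ of each $T_j$ with $j\ne i_0$, I would set $W=W_0\cup\bigcup_{j\ne i_0}W_j$, check $|W|=N/d$ by divisibility bookkeeping, and verify coverage using the same orbit-agreement principle. Finally, uniqueness of $i_0$ is a short divisibility argument: if two indices $i_0\ne i_1$ both had the stated property, then $\widetilde{T_{i_0}}$ being splittable gives $d\mid(|T_{i_0}|+1)$, while $T_{i_0}$ being splittable (forced by the property of $i_1$) gives $d\mid|T_{i_0}|$, whence $d\mid 1$, impossible for $d\ge 2$. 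I expect the main obstacle to be the careful justification of the covering/orbit-agreement step, namely controlling precisely how the induced dynamics on a subtree differs from its standalone dynamics only at the step that leaves the subtree, and confirming that this discrepancy never obstructs the covering of the subtree's internal nodes.
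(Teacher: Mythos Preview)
The paper states this lemma without proof, so there is nothing to compare against directly. Your argument is sound and supplies the missing justification.

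A few remarks on your write-up. The pigeonhole reformulation at the start (that $W,TW,\dots,T^{d-1}W$ are forced to partition $[N]$, each of size $N/d$, with every $T^{\ell}$ injective on $W$) is exactly the right engine, and your deduction that $R\notin W$ and that the preimage $w_0$ of $R$ is unique is correct. The forward direction is clean: the key point, which you identify, is that for a node $u\in T_j$ the witness $w$ with $T^{\ell}(w)=u$ must lie in $T_j$, since the $T$-orbit of any vertex stays inside its own subtree until (and including) the step it reaches $r_j$, and only afterwards moves to $R$. Your inequality-summing device to pin down the exact sizes $|W_j|=|T_j|/d$ and $|W_0|=|\widetilde{T_{i_0}}|/d$ is the natural way to finish.

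For the backward direction, one small point worth making explicit: when $u=r_j$ is the subtree root, the index $\ell$ with $T_j^{\ell}(w)=r_j$ need not satisfy $T^{\ell}(w)=r_j$ (since in $T$ the orbit leaves $T_j$ one step after reaching $r_j$), but the minimal such index $s\le \ell\le d-1$ does, so $r_j\in T^{s}W$. You allude to this under ``orbit-agreement'', and it is all that is needed. The divisibility argument for uniqueness of $i_0$ is correct and is the cleanest way to see it.

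In short, your proof is correct and would serve well as the omitted proof of the lemma; the only suggestion is to spell out the $u=r_j$ case in the converse so the reader sees why the discrepancy between $T_j$-dynamics and $T$-dynamics at the subtree root does no harm.
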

\begin{lemma}
\label{induc}
For every tree $T$ on $[N]$ which has a $2$-splitting subset, the following holds $$g_T(-1)=1.$$
\end{lemma}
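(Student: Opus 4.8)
The plan is to combine the product recursion from Lemma~\ref{gf_tree} with the structural criterion of Lemma~\ref{star}, and run a strong induction on the number of nodes $N$. Writing the subtrees of $T$ as $T_1,\dots,T_k$ and setting $t=-1$ in Lemma~\ref{gf_tree} gives
$$
g_T(-1)=1-\prod_{i=1}^{k} g_{T_i}(-1).
$$
A first easy induction shows that $g_S(-1)\in\{0,1\}$ for every tree $S$ (the empty product at a leaf gives $g(-1)=0$, and the recursion keeps the value in $\{0,1\}$). Hence $g_T(-1)=1$ is equivalent to the existence of at least one subtree $T_i$ with $g_{T_i}(-1)=0$, and the whole task reduces to producing such a subtree.

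I would isolate the crux as an auxiliary claim: for a rooted tree $S$ with subtrees $S_1,\dots,S_\ell$, the extension $\widetilde{S}$ has a $2$-splitting subset if and only if each $S_j$ has a $2$-splitting subset. To prove this I would first reformulate $2$-splitting: a subset $W$ with $|W|=|V|/2$ satisfies $W\cup TW=V$ if and only if $T$ maps $W$ bijectively onto its complement, i.e. $TW=V\setminus W$ (a cardinality count forces $|TW|=|W|$ and $W\cap TW=\varnothing$). Applied to $\widetilde S$, whose root $R'$ is a fixed point of $T$, this forces $R'\notin W$; covering $R'$ then forces its unique child $R$ (the root of $S$) into $W$; and since $R\in W$ cannot also lie in $TW$, none of the subtree-roots $r_j$ of $S$ lie in $W$. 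Consequently each $W_j:=W\cap S_j$ avoids $r_j$, so $W_j$ maps entirely within $S_j$ and $S_j=W_j\sqcup T(W_j)$; that is, $W_j$ is a $2$-splitting subset of $S_j$. The converse is the same computation read backwards. This claim is the real content of the lemma, and establishing it cleanly is the main obstacle.

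With the claim in hand the induction closes without circularity. Suppose $T$ (on $N$ nodes, so $N$ is even) has a $2$-splitting subset. By Lemma~\ref{star} there is a unique subtree $T_i$ whose extension $\widetilde{T_i}$ has a $2$-splitting subset, while every other $T_j$ has a $2$-splitting subset. For $j\neq i$ the tree $T_j$ is strictly smaller, so the induction hypothesis gives $g_{T_j}(-1)=1$. For the distinguished index, the auxiliary claim applied to $S=T_i$ shows that every subtree of $T_i$ has a $2$-splitting subset; each such subtree is strictly smaller than $N$, so by induction each contributes $g(-1)=1$, and the recursion of Lemma~\ref{gf_tree} applied to $T_i$ yields $g_{T_i}(-1)=1-\prod_j g_{S_{i,j}}(-1)=1-1=0$, where $S_{i,1},\dots$ are the subtrees of $T_i$ (and $g_{T_i}(-1)=0$ trivially if $T_i$ is a single node). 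Thus the product $\prod_i g_{T_i}(-1)$ vanishes and $g_T(-1)=1$, completing the induction.

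The one point demanding care is that this is a genuine strong induction on $N$: the auxiliary claim must be proved \emph{directly} (via the maps-onto-complement reformulation) rather than through the values of $g$, since that is precisely what breaks the apparent circularity in the single-subtree case $k=1$, where $\widetilde{T_i}=T$ and Lemma~\ref{star} on its own merely restates the hypothesis.
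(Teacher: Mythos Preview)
Your proof is correct and takes a genuinely different route from the paper's. The paper inducts on the number of \emph{chains} of $T$: it peels off a leaf-chain $C$ of even length (Lemma~\ref{star} guarantees one exists), writes $T=\widetilde T\leftarrow C$, and uses $g_C(-1)=1$ to obtain $g_T(-1)=g_{\widetilde T}(-1)$; implicitly this rests on the (non-multiplicative) relation $g_T(t)=g_{\widetilde T}(t)+(g_C(t)-1)\,h(t)$, where $h$ counts $\widetilde T$-invariant subsets containing the branching node. You instead run a strong induction on the number of nodes directly through the root recursion $g_T(-1)=1-\prod_i g_{T_i}(-1)$ of Lemma~\ref{gf_tree}, and your auxiliary claim---that $\widetilde S$ has a $2$-splitting subset iff every subtree of $S$ does, proved via the ``$W$ maps bijectively onto its complement'' reformulation---is exactly what breaks the apparent circularity in the single-subtree case. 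The paper's argument is shorter once the chain-removal identity is granted; yours is more self-contained, makes every step explicit, and yields the side observation $g_S(-1)\in\{0,1\}$ for all trees $S$. One small simplification: once you have $g_{T_i}(-1)=0$ for the distinguished subtree, the product $\prod_j g_{T_j}(-1)$ already vanishes, so invoking the induction hypothesis for $j\ne i$ is unnecessary.
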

\begin{proof}
We prove by induction on the number of chains of the tree $T$.
If there is one chain, then
$$
g_T(t)=1+t+\cdots+t^N.
$$
Since $T$ has a $2$-splitting subset, $N$ is even. Therefore $g_T(-1)=1$. Suppose the result holds for every tree having less than $k$ chains.
Let $T$ be a tree for which a $2$-splitting subset exists and has $k$ chains. Then $T=\widetilde{T} \leftarrow C$,
where $C$ is a chain starting from a leaf and ending to a branching node (branching node excluded), having an even number of nodes, and $\widetilde{T}$ is a tree. Such a choice of $C$ is always possible since $T$ has a $2$-splitting subset and whenever a branching occurs in $T$, then  one child is odd and others are even (see Lemma \ref{star}). Existence of a $2$-splitting subset for $T$ implies that $\widetilde{T}$ also has a $2$-splitting subset, and $g_{\widetilde{T}}(-1)=1$ by induction hypothesis. Since $g_C(-1)=1$, the result follows.
\end{proof}

The following corollary will be useful to prove the main result.
\begin{corollary}
\label{add_node}
Let $T$ be a tree such that $T$ doesn't have a $2$-splitting subset and let $\widetilde{T}$ be a rooted tree obtained by joining $T$ to a root node $R$ such that $\widetilde{T}$ has a splitting subset, then $f_T(-1)=0$.
\end{corollary}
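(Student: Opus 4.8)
The plan is to obtain the identity as a one-line consequence of the two lemmas that immediately precede it, so that essentially no new combinatorial work is needed. The crucial structural observation is that $\widetilde{T}$ is exactly a rooted tree whose root $R$ carries a single subtree, namely $T$ itself. I would therefore invoke Lemma~\ref{gf_tree} in its $k=1$ instance to record the recursion
$$
g_{\widetilde{T}}(t) = 1 + t\,g_T(t).
$$
This is the only structural input, and it is what ties $g_T$ to $g_{\widetilde{T}}$.

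Next I would feed in the splitting hypothesis. By assumption $\widetilde{T}$ admits a $2$-splitting subset, so Lemma~\ref{induc} applies directly to $\widetilde{T}$ and gives $g_{\widetilde{T}}(-1)=1$. Substituting $t=-1$ into the recursion then yields
$$
1 = g_{\widetilde{T}}(-1) = 1 - g_T(-1),
$$
and solving forces $g_T(-1)=0$, which is the assertion of the corollary.

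I do not expect a genuine obstacle, since the whole argument is just the interaction of Lemma~\ref{gf_tree} (the structure of $\widetilde{T}$) with Lemma~\ref{induc} (its splitting subset). The only two points I would pause on are bookkeeping rather than difficulty. First, I would make explicit that adjoining a fresh root $R$ above all of $T$ produces a rooted tree with precisely one subtree, so that the $k=1$ specialization of Lemma~\ref{gf_tree} is indeed the correct instance to invoke. Second, I would note the internal consistency of the hypotheses: a $2$-splitting subset of $\widetilde{T}$ has cardinality $|\widetilde{T}|/2$, so $|\widetilde{T}| = |T|+1$ is even and hence $|T|$ is odd; a tree on an odd number of nodes cannot carry a $2$-splitting subset, so the stated assumption that $T$ has no $2$-splitting subset is in fact automatic in this situation. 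This parity remark also explains, at the level of the heuristic $\sigma(2;T)=g_T(-1)$ underlying Theorem~\ref{main}, why $g_T(-1)$ should vanish precisely when $T$ has no splitting subset.
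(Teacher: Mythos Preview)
Your proof is correct and is essentially the same as the paper's: the paper writes $f_{\widetilde{T}}(t)=1+t\,f_T(t)$ (the $k=1$ case of Lemma~\ref{gf_tree}) and combines it with $f_{\widetilde{T}}(-1)=1$ from Lemma~\ref{induc} to conclude $f_T(-1)=0$, exactly as you do. One small caution on your closing heuristic: the paper's Remark after Theorem~\ref{main} shows that for general trees with $\sigma(2;T)=0$ the value $g_T(-1)$ need not vanish, so your ``precisely when'' is only valid in this particular $\widetilde{T}=T\to R$ situation, not as a general principle.
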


\begin{proof}
By Lemma \ref{induc}, $f_{\widetilde{T}}(-1)=1$ and $f_{\widetilde{T}}(t)=1+t f_{T}(t)$. Therefore $f_T(-1)=0$.
\end{proof}
The next proposition shows that Theorem \ref{main} holds for trees.
\begin{proposition}
\label{unique tree}
If a $2$-splitting subset exists for a tree, then it is unique.
\end{proposition}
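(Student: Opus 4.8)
The plan is to translate the existence of a $2$-splitting subset into a purely local condition on the tree and then run a leaves-to-root forcing argument that determines the membership of each node with no freedom of choice. First I would reduce the splitting condition to a bijection: if $W$ is a $2$-splitting subset of the tree $T$ on $[N]$, then $N=2m$, $|W|=m$, and $W\cup TW=[N]$. Since $|TW|\le|W|=m$ while $|W\cup TW|=2m$, the sets $W$ and $TW$ must be disjoint with $|TW|=m$; hence $TW=\overline{W}$ and $T$ restricts to a bijection $T|_W\colon W\to\overline{W}$ (conversely, any such $W$ is $2$-splitting). Calling the elements of $W$ \emph{black} and those of $\overline{W}$ \emph{white}, and using that $T$ sends each non-root node to its parent and the root $R$ to itself, bijectivity of $T|_W$ unwinds into three local statements: (i) $R$ is white (if $R\in W$ then $R=TR\in TW=\overline{W}$, a contradiction); (ii) every black node has a white parent and only white children (an edge joining two black nodes would force a black node into $\overline{W}$); and (iii) injectivity and surjectivity of $T|_W$ force every node to have \emph{at most} one black child and every white node to have \emph{exactly} one black child.

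Then I would prove uniqueness by induction along the tree, coloring each node only after all of its children have been decided (i.e.\ from the leaves up to the root). The key observation is that (ii)--(iii) make the color of a node a \emph{function} of the colors already assigned to its children: a node with no black child cannot be white, hence must be black; a node with exactly one black child cannot be black, hence must be white; and a node with two or more black children can be neither, contradicting the assumed existence of a $2$-splitting subset. Since the number of black children is already fixed once the children are colored, each node's color is forced, starting from the leaves (which have no children and are therefore black) and ending at the root, whose color $R=\text{white}$ is then automatically consistent. No choice is ever made, so $W$ is unique.

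The only genuinely delicate point, and the one I would flag, is the \emph{direction} of the induction. A top-down pass from the root is ambiguous: a white node must carry exactly one black child, but when it has several children there is no a priori rule for which child is black, so the coloring superficially looks non-unique. The resolution is that branching is settled from below: by the time a node is examined, all of its children have already been colored, and condition (iii) (at most one black child, with existence guaranteeing the count is never two or more) pins down its color deterministically. I would also remark that this local dichotomy is precisely the content of Lemma \ref{star}, where at each branching node exactly one subtree supplies the black child; thus the recursive existence criterion and the uniqueness statement are two facets of the same structural fact.
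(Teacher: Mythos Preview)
Your argument is correct and shares the paper's core idea: start from the leaves, which are forced into $W$, and propagate toward the root with no choices available. The paper executes this as a peeling procedure---remove each leaf together with its parent, obtain a forest in which each tree again admits a $2$-splitting subset, and recurse until only two-node chains remain---whereas you recast it as a local colouring rule derived from the bijection $T|_W\colon W\to\overline{W}$, then process nodes bottom-up so that each node's colour is dictated by the number of black children. Your formulation has the advantage of making the structural constraints (root white; every white node has exactly one black child; no node has two black children) fully explicit, which in turn makes the link to Lemma~\ref{star} transparent and handles the case of several leaves sharing a parent without a separate remark. The paper's version is shorter and more procedural but leaves the reader to verify that the peeled forest indeed inherits a $2$-splitting subset. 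Both routes are equally valid presentations of the same leaves-to-root determinism.
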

\begin{proof}
Let $T$ be a tree for which a $2$-splitting subset exists. Leaves of the tree must belong to the splitting subset since they are not fed by any other node. Cut off the leaves along with the nodes to which they are mapped, to obtain a forest. Now each tree in the forest has a $2$-splitting subset. Repeat the above process with each tree of the forest until we are left with chains having two nodes, each of which has exactly one $2$-splitting subset, namely the upper node of the chain. This shows that the $2$-splitting subset for $T$ is unique.
\end{proof}

We are finally ready to prove Theorem \ref{main}.

\begin{proof}[Proof of Theorem \ref{main}]
Let $T$ be an endofunction such that $\sigma(2;T)>0$. It is enough to prove the result when $T$ is connected. Let $T$ has a central cycle with $k$ trees $T_1,\ldots,T_k$ attached to the nodes of the cycle. The following two cases arise.\\
{\it Case 1: Each tree $T_1,\ldots,T_k$ has a $2$-splitting subset.}\\
Since $\sigma(2;T)>0$ and each attached tree has a $2$-splitting subset, it follows by Proposition \ref{unique tree} that the number of nodes on the cycle is even. In this case, $\sigma(2;T)=2$ since we have two sets of choices to select the nodes for $2$-splitting subsets (see, for example, Figures \ref{w1} and \ref{w2}). By Lemma \ref{genfn},
$$
g_T(-1)=1+(-1)^{r}\prod_{i=1}^{k}g_{T_i}(-1),
$$
where $r$ is the number of nodes on the cycle.
Since $g_{T_i}(-1)=1$ for each $i$, the result follows.\\
{\it Case 2: Some of the trees among $T_1,\ldots,T_k$ have a $2$-splitting subset (only when) combined with the node of the cycle to which they are attached.}\\
Since $\sigma(2;T)>0$, the trees which do not have a $2$-splitting subset must have a $2$-splitting subset together with the node of the cycle to which they are attached. Moreover, at each cycle node, there could be at most one such tree (Lemma \ref{star}). If the cycle has an even (odd) number of nodes, then the number of such trees is even (odd). Also, such trees occur at even gaps (i.e. there exists an even number of nodes in the cycle between the nodes to which such trees are attached). This gives a unique choice for the $2$-splitting subset, thereby $\sigma(2;T)=1$. And
$$
g_T(-1)=1+(-1)^{r}\prod_{i=1}^{k}f_{T_i}(-1)=1,
$$
where the last equality follows by Corollary \ref{add_node} since $f_{T_j}(-1)=0$ for some $1\leq j \leq k$.
\end{proof}

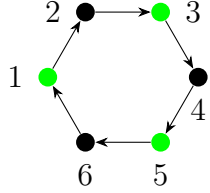
\begin{figure}
\caption{$W_1=\{1,3,5\}$.}
\label{w1}
 \begin{minipage}{\textwidth}
      \centering
      \begin{tikzpicture}
        [scale=1,every node/.style={circle,fill=black},inner sep=2.5pt, minimum size=6pt]
\node[style={circle,fill=green},label=left:$1$] (1) at (-1,0) {};
        \node[label=left:$2$] (2) at (-0.5,0.866) {};
        \node[style={circle,fill=green},label=right:$3$] (3) at (0.5,0.866) {};
        \node[label=below:$4$] (4) at (1,0) {};
        \node[style={circle,fill=green},label=below:$5$] (5) at (0.5,-0.866) {};
        \node[label=below:$6$] (6) at (-0.5,-0.866) {};
\draw[-{Stealth[slant=0]}]
        (1)  to  (2);
        \draw[-{Stealth[slant=0]}]
        (2)  to  (3);
        \draw[-{Stealth[slant=0]}]
        (4) to  (5);
 \draw[-{Stealth[slant=0]}]
        (5) to (6);
 \draw[-{Stealth[slant=0]}]
        (3)  to  (4);
 \draw[-{Stealth[slant=0]}]
        (6)  to  (1);
      \end{tikzpicture}
\end{minipage}
\end{figure}

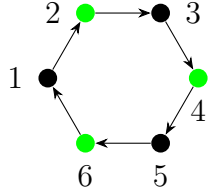
\begin{figure}[h]
\caption{$W_2=\{2,4,6\}$.}
\label{w2}
  \begin{minipage}{\textwidth}
      \centering
      \begin{tikzpicture}
        [scale=1,every node/.style={circle,fill=black},inner sep=2.5pt, minimum size=6pt]
\node[label=left:$1$] (1) at (-1,0) {};
        \node[style={circle,fill=green},label=left:$2$] (2) at (-0.5,0.866) {};
        \node[label=right:$3$] (3) at (0.5,0.866) {};
        \node[style={circle,fill=green},label=below:$4$] (4) at (1,0) {};
        \node[label=below:$5$] (5) at (0.5,-0.866) {};
        \node[style={circle,fill=green},label=below:$6$] (6) at (-0.5,-0.866) {};
\draw[-{Stealth[slant=0]}]
        (1)  to  (2);
        \draw[-{Stealth[slant=0]}]
        (2)  to  (3);
        \draw[-{Stealth[slant=0]}]
        (4) to  (5);
 \draw[-{Stealth[slant=0]}]
        (5) to (6);
 \draw[-{Stealth[slant=0]}]
        (3)  to  (4);
 \draw[-{Stealth[slant=0]}]
        (6)  to  (1);
      \end{tikzpicture}
\end{minipage}
\end{figure}

\begin{remark}
We remark that if $\sigma(2;T)=0$ for an endofunction $T$, then $g_{T}(-1)$ may be zero or non-zero. We provide examples for both cases.\\ \\
{\it Case 1: $\sigma(2;T)=0$ and $g_T(-1)=0$.}\\
Let $T:[4] \rightarrow [4]$ be defined as $1 \rightarrow 3$, $2 \rightarrow 3$, $3 \rightarrow 4$, and $4 \rightarrow 4$ (see Fig. \ref{fig 1}). Here $g_T(t)=1+t+t^2(1+t)^2$. Clearly $\sigma(2,2;T)=0$ by Lemma \ref{star} and $g_T(-1)=0$ as well.
\begin{figure}[h]
 \caption{$\sigma(2;T)=0$ and $g_T(-1)=0$}
      \label{fig 1}
  \begin{center}
    \begin{minipage}{\textwidth}
      \centering
      \begin{tikzpicture}
[scale=1,every node/.style={circle,fill=black},inner sep=2.5pt, minimum size=6pt]
        \node[label=right:$3$] (3) at (0,0) {};
        \node[label=right:$4$] (4) at (0,-1) {};
        \node[label=right:$1$] (1) at (-1,1) {};
        \node[label=right:$2$] (2) at (1,1) {};
     
        \draw[-{Stealth[slant=0]}]
        (1) to  (3) ;
        \draw[-{Stealth[slant=0]}]
        (2) to  (3);
  \draw[-{Stealth[slant=0]}]
        (3) to  (4) ;
\draw [-{Stealth[slant=0]}]
(4) to [out=220,in=300,looseness=8] (4);
      \end{tikzpicture}
\end{minipage}
  \end{center}
\end{figure}\\
{\it Case 2: $\sigma(2;T)=0$ and $g_T(-1) \neq 0$.}\\
Consider the endofunction $T:[4] \rightarrow [4]$ given by $1 \rightarrow 4$, $2 \rightarrow 4$, $3 \rightarrow 4$, and $4 \rightarrow 4$ (see Fig. \ref{fig 2}). Again by Lemma \ref{star}, $\sigma(2,2;T)=0$. But $g_T(t)=1+t(1+t)^3$, so $g_T(-1)=1 \neq 0$.
\begin{figure}[h]
 \caption{$\sigma(2;T)=0$ and $g_T(-1) \neq 0$}
     \label{fig 2}
  \begin{center}
    \begin{minipage}{\textwidth}
      \centering
      \begin{tikzpicture}
[scale=1,every node/.style={circle,fill=black},inner sep=2.5pt, minimum size=6pt]
        \node[label=right:$4$] (4) at (0,0) {};
        \node[label=right:$1$] (1) at (-1.5,1) {};
        \node[label=right:$2$] (2) at (0,1) {};
        \node[label=right:$3$] (3) at (1.5,1) {};
     
        \draw[-{Stealth[slant=0]}]
        (1) to  (4) ;
        \draw[-{Stealth[slant=0]}]
        (2) to  (4);
  \draw[-{Stealth[slant=0]}]
        (3) to  (4) ;
\draw [-{Stealth[slant=0]}]
(4) to [out=220,in=300,looseness=8] (4);
      \end{tikzpicture}
\end{minipage}
  \end{center}
\end{figure}
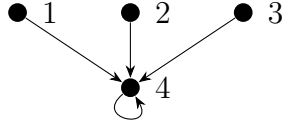
\end{remark}

\section{Cycles and trees}
\label{sec 3}
In this section, we consider the general case of $d$-splitting subsets. The result for $d=2$ case does not hold in general when $d>2$. We shall prove that if $T$ is either a cycle or a chain, then $\sigma(d;T)=g_T(\zeta, \zeta^2, \ldots, \zeta^d)$, where $g_T(t_1,\ldots,t_d)$ is the number of $d$-flags of $T$-invariant subsets (see Definition \ref{def}). We will further prove that if $T$ is a tree such that $\sigma(d;T)>0$, then $\sigma(d;T)=g_T(\zeta, \zeta^2, \ldots, \zeta^d)$. Recall from the introduction the definition of flags of $T$-invariant subsets.

\begin{defn}
Let $T$ be an endofunction on $[N]$. An increasing sequence of subsets of $[N]$,
$$
\emptyset = U_0 \subseteq U_1 \subseteq \cdots \subseteq U_{d-1} \subseteq U_d =[N]
$$
is said to be a $d$-flag \cite[p. 100]{MR2868112} of $T$-invariant subsets if $TU_i  \subseteq U_i$ for all $1 \leq i \leq d$.
\end{defn}
We will denote by a {\it flag}, a {\it $d$-flag of $T$-invariant subsets}, when the length of the flag is clear from the context. Next, we define the generating function for the number of flags.

\begin{defn}
\label{def}
Let $T$ be an endofunction on $[N]$. The generating function for the number of $d$-flags of $T$-invariant subsets is defined as:
$$
g_T(t)=g_T(t_1,\ldots,t_d) = \sum_{J} a_{J} t^J,
$$
where the summation runs over all $d$-tuples of non-negative integers $J=(j_1,\ldots,j_d)$ and $t^J$ denotes $t_1^{j_1}  \cdots t_d^{j_d}$. Here $a_J$ is the number of flags $\emptyset = U_0 \subseteq U_1 \subseteq \cdots \subseteq U_{d-1} \subseteq U_d=[N]$ such that $|U_i|=j_1+\cdots+j_i ~\forall ~1 \leq i \leq d$.
\end{defn}

\begin{example}
Let $T:[4] \rightarrow [4]$ be the cycle $1 \rightarrow 2 \rightarrow 3 \rightarrow 4 \rightarrow 1$ as shown in Figure \ref{fig_cycle}.
\begin{figure}[h]
\caption{A cycle with 4 nodes.}
\label{fig_cycle}
  \begin{center}
    \begin{minipage}{\textwidth}
      \centering
      \begin{tikzpicture}
        [scale=1,every node/.style={circle,fill=black},inner sep=2.5pt, minimum size=6pt]
        \node[label=below:$1$] (1) at (-1,0) {};
        \node[label=below:$3$] (3) at (1,0) {};
        \node[label=below:$2$] (2) at (0,1) {};
        \node[label=below:$4$] (4) at (0,-1) {};
        \draw[-{Stealth[slant=0]}]
        (1)  to  (2);
        \draw[-{Stealth[slant=0]}]
        (4)to  (1);
 \draw[-{Stealth[slant=0]}]
        (2)to  (3);
 \draw[-{Stealth[slant=0]}]
        (3)  to  (4);
      \end{tikzpicture}
\end{minipage}
  \end{center}
\end{figure}
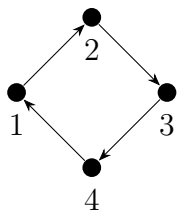
Let $\emptyset = U_0 \subseteq U_1 \subseteq \cdots \subseteq U_d=[4]$ be a flag of $T$-invariant subsets. If $|U_i| \geq 1$ for some $i$, then it is evident that $|U_i|=4$. Therefore for each $1 \leq i \leq d$, we have one flag such that $|U_i|=4$, $|U_j|=0$ for $j<i$. Hence $g_T(t)=g_T(t_1,\ldots,t_d)=t_1^4 +\cdots + t_d^4$.
\end{example}

The following result shows that this holds in general for cycles.

\begin{proposition}
\label{gf_cycle}
Let $T$ be a cycle with $N$ nodes. Then the generating function for the number of $d$-flags of $T$-invariant subsets is
$$
g_T(t)=g_T(t_1,\ldots,t_d)=t_1^N +\cdots + t_d^N.
$$
\end{proposition}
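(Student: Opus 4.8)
The plan is to reduce everything to the single structural fact already recorded in the earlier proposition for cycles: a cycle on $[N]$ has exactly two $T$-invariant subsets, namely $\emptyset$ and $[N]$. Indeed, if $U$ is nonempty and $T$-invariant, then starting from any $x \in U$ and applying $T$ repeatedly sweeps out the entire single cycle, forcing $U = [N]$. Every set $U_i$ occurring in a $d$-flag $\emptyset = U_0 \subseteq U_1 \subseteq \cdots \subseteq U_{d-1} \subseteq U_d = [N]$ is $T$-invariant by definition, so each $U_i \in \{\emptyset, [N]\}$.

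Next I would exploit the chain condition to describe the flags explicitly. Since the sequence is increasing, starts at $U_0 = \emptyset$, ends at $U_d = [N]$, and every intermediate term is one of the two admissible sets, the flag is monotone as a $0/[N]$ sequence and is therefore completely determined by the unique index $i \in \{1, \ldots, d\}$ at which it jumps: $U_0 = \cdots = U_{i-1} = \emptyset$ and $U_i = \cdots = U_d = [N]$. Conversely, each such $i$ produces a valid flag. Hence there are exactly $d$ flags of $T$-invariant subsets, one per jump position.

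Finally I would read off the monomial attached to each flag. For the flag that jumps at position $i$ we have $|U_j| = 0$ for $j < i$ and $|U_j| = N$ for $j \geq i$. Using the defining relation $|U_j| = j_1 + \cdots + j_j$, the conditions $|U_1| = \cdots = |U_{i-1}| = 0$ give $j_1 = \cdots = j_{i-1} = 0$, the jump $|U_i| = N$ gives $j_i = N$, and the stabilization $|U_{i+1}| = \cdots = |U_d| = N$ forces $j_{i+1} = \cdots = j_d = 0$. Thus the exponent vector is $J = (0, \ldots, 0, N, 0, \ldots, 0)$ with $N$ in the $i$-th slot, so $t^J = t_i^N$. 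Summing over $i$ yields
$$
g_T(t) = \sum_{i=1}^{d} t_i^{N} = t_1^N + \cdots + t_d^N,
$$
as claimed.

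There is no serious obstacle here: the argument is essentially a translation exercise once the $\{\emptyset,[N]\}$ dichotomy for cycle-invariant subsets is in hand. The only point requiring care is the bookkeeping in the last step — correctly converting the cardinality sequence of a flag into the exponent vector $J$, and in particular confirming that a jump at position $i$ corresponds precisely to the monomial $t_i^N$ rather than, say, a product of several variables.
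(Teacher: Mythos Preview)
Your proof is correct and follows essentially the same approach as the paper: both arguments rest on the observation that the only $T$-invariant subsets of a cycle are $\emptyset$ and $[N]$, so each $d$-flag is determined by the unique index $i$ at which it jumps from $\emptyset$ to $[N]$, contributing the monomial $t_i^N$. Your write-up is simply more explicit about the bookkeeping that converts the cardinality sequence into the exponent vector $J$, which the paper leaves implicit.
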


\begin{proof}
Let $\emptyset = U_0 \subseteq U_1 \subseteq \cdots \subseteq U_{d-1} \subseteq U_d=[N]$ be a flag of $T$-invariant subsets. If $|U_i| \geq 1$ for some $i$, then since $TU_i \subseteq U_i$, it follows that $|U_i|=N$. Therefore for each $1 \leq i \leq d$, we have one flag such that $|U_i|=N$, $|U_j|=0$ for $j<i$.
\end{proof}

Recall the complete symmetric polynomials \cite[p. 383]{MR2777360}. For fixed $k \geq 1$, the polynomial
$$
h_k(x_1,\ldots,x_N) = \sum_{1 \leq i_1 \leq i_2 \leq \cdots \leq i_k \leq N} x_{i_1} x_{i_2} \cdots x_{i_k}
$$
is called the complete homogeneous symmetric polynomial in $N$ variables. The generating function for $h_k(x_1,\ldots,x_N)$ \cite[p. 63]{MR4057234} is 
\begin{align}
\label{complete}
\sum_{k=0}^{\infty}h_k(x_1,\ldots,x_N) t^k=\prod_{i=1}^N \frac{1}{1-x_i t}.
\end{align}
The next proposition illustrates the generating function for the number of $d$-flags of $T$-invariant subsets when $T$ is a  chain.

\begin{proposition}
\label{gf_chain}
Let $T$ be a chain of length $k$. Then 
$$
g_T(t)=g_T(t_1,\ldots,t_d)= h_k(t_1,\ldots,t_d),
$$
where $h_k(t_1,\ldots,t_d)$ is the complete homogeneous symmetric polynomial of degree $k$ in the variables $t_1, \ldots, t_d$.
\end{proposition}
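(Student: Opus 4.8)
The plan is to reduce the enumeration of $d$-flags to counting weakly increasing integer sequences, and then to recognize the resulting generating function as the monomial expansion of $h_k$. First I would recall from Proposition~\ref{gf chain} that the only $T$-invariant subsets of a chain on $[k]$ are the $k+1$ suffixes, namely the set consisting of the last $i$ nodes for each $0 \le i \le k$. In particular, a $T$-invariant subset of a chain is completely determined by its cardinality, and conversely every cardinality in $\{0,1,\ldots,k\}$ is realized by exactly one invariant subset.

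Next I would use this to parametrize the flags appearing in Definition~\ref{def}. Given a $d$-flag $\emptyset = U_0 \subseteq U_1 \subseteq \cdots \subseteq U_d = [k]$ of $T$-invariant subsets, each $U_i$ is a suffix and is therefore pinned down by $|U_i|$, while the inclusions force $0 = |U_0| \le |U_1| \le \cdots \le |U_d| = k$. Conversely, any weakly increasing sequence of cardinalities from $0$ to $k$ produces suffixes that automatically nest and are automatically $T$-invariant, so it determines a unique flag. Writing $j_i = |U_i| - |U_{i-1}|$ as in Definition~\ref{def}, this yields a bijection between $d$-flags and $d$-tuples $(j_1,\ldots,j_d)$ of nonnegative integers with $j_1 + \cdots + j_d = k$. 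Hence each coefficient $a_J$ equals $1$ when $j_1 + \cdots + j_d = k$ and $0$ otherwise, so that
$$
g_T(t_1,\ldots,t_d) = \sum_{\substack{j_1,\ldots,j_d \ge 0 \\ j_1 + \cdots + j_d = k}} t_1^{j_1} \cdots t_d^{j_d}.
$$

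Finally I would identify this sum with $h_k(t_1,\ldots,t_d)$. In the defining sum $h_k = \sum_{1 \le i_1 \le \cdots \le i_k \le d} t_{i_1}\cdots t_{i_k}$, a weakly increasing index sequence is exactly the data of the multiplicity vector $(j_1,\ldots,j_d)$ recording how many of the $i_\ell$ take each value in $\{1,\ldots,d\}$, subject to $j_1 + \cdots + j_d = k$; thus the defining sum for $h_k$ coincides term by term with the display above. Alternatively one could extract the coefficient of $t^k$ in the product form~\eqref{complete}. I do not expect a genuine obstacle here: all of the content sits in the first step, the observation that the invariant subsets of a chain are precisely its suffixes, and the only point requiring care is checking that the bijection is exact, so that every degree-$k$ monomial in $d$ variables occurs with coefficient exactly one on both sides.
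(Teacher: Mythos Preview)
Your proof is correct and follows essentially the same approach as the paper: both arguments use that the $T$-invariant subsets of a chain are exactly the suffixes (one per cardinality), so that $d$-flags are in bijection with compositions $j_1+\cdots+j_d=k$, giving each degree-$k$ monomial coefficient~$1$. You are slightly more explicit than the paper in spelling out the final identification of $\sum t_1^{j_1}\cdots t_d^{j_d}$ with the defining sum for $h_k$, but the substance is identical.
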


\begin{proof}
Let $\emptyset = U_0 \subseteq U_1 \subseteq \cdots \subseteq U_{d-1} \subseteq U_d=[k]$ be a flag of $T$-invariant subsets with 
$$
|U_i|=j_1+\cdots+j_i ~ \forall ~ i.
$$
Since $k= |U_d|=j_1+\cdots+j_d$, it follows that each term in the generating function $g_T(t)$ must be of degree $k$. As $T$ is a chain, for any $1 \leq s \leq k$, there is a unique $T$-invariant subset of cardinality $s$, namely, the last $s$ nodes of the chain. Therefore for any $d$-tuple of non-negative integers $(j_1,\ldots,j_d)$, there is a unique flag of $T$-invariant subsets $\emptyset = U_0 \subseteq U_1 \subseteq \cdots \subseteq U_{d-1} \subseteq U_d=[k]$ such that $|U_i|=j_1+\cdots+j_i$, and hence the coefficient of $t_1^{j_1} \cdots t_d^{j_d}$ is $1$.
\end{proof}
The following corollary is an immediate consequence of the above result, which we will use repeatedly.
\begin{corollary}
\label{zeta}
Let $T$ be a chain on $[n]$. Then
$$
g_T(\zeta,\zeta^2,\ldots,\zeta^d)=1.
$$
\end{corollary}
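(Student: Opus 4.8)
The plan is to combine Proposition \ref{gf_chain} with the generating-function identity \eqref{complete} for the complete homogeneous symmetric polynomials, and then read off a single coefficient.

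First I would invoke Proposition \ref{gf_chain}: since $T$ is a chain on $[n]$, it has length $n$, and therefore
$$
g_T(\zeta,\zeta^2,\ldots,\zeta^d) = h_n(\zeta,\zeta^2,\ldots,\zeta^d).
$$
So the problem reduces to evaluating the complete homogeneous symmetric polynomial $h_n$ at the $d$ arguments $\zeta,\zeta^2,\ldots,\zeta^d$. The key observation is that, because $\zeta$ is a \emph{primitive} $d^{th}$ root of unity, the list $\zeta,\zeta^2,\ldots,\zeta^{d-1},\zeta^d=1$ is precisely the set of all $d$-th roots of unity, each occurring exactly once.

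I would therefore apply \eqref{complete} with $N=d$ and $x_i=\zeta^i$ to obtain
$$
\sum_{k=0}^{\infty} h_k(\zeta,\zeta^2,\ldots,\zeta^d)\, t^k = \prod_{i=1}^{d} \frac{1}{1-\zeta^i t}.
$$
To simplify the right-hand side, I would use the factorization of $X^d-1$ over the $d$-th roots of unity: from $\prod_{\omega^d=1}(X-\omega)=X^d-1$, substituting $X=1/t$ and clearing denominators gives $\prod_{i=1}^d(1-\zeta^i t)=1-t^d$. Hence
$$
\sum_{k=0}^{\infty} h_k(\zeta,\zeta^2,\ldots,\zeta^d)\, t^k = \frac{1}{1-t^d} = \sum_{j=0}^{\infty} t^{dj}.
$$

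Comparing coefficients of $t^n$ on both sides yields $h_n(\zeta,\zeta^2,\ldots,\zeta^d)=1$ exactly when $d\mid n$ (and $0$ otherwise). Since $d\mid n$ is a standing hypothesis throughout the paper, the desired equality $g_T(\zeta,\zeta^2,\ldots,\zeta^d)=1$ follows. The computation is essentially routine; the only point requiring care is the identification of $\{\zeta^i : 1 \le i \le d\}$ with the full set of $d$-th roots of unity, together with the resulting collapse of the product to $1-t^d$, which is precisely what forces the generating series to become $1/(1-t^d)$ and thereby isolates the divisibility condition $d \mid n$.
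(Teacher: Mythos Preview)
Your proof is correct and follows essentially the same approach as the paper: invoke Proposition~\ref{gf_chain} to reduce to $h_n(\zeta,\zeta^2,\ldots,\zeta^d)$, apply the generating-function identity~\eqref{complete}, collapse the product to $1/(1-t^d)$, and read off the coefficient of $t^n$ using $d\mid n$. The only difference is that you supply the explicit justification $\prod_{i=1}^d(1-\zeta^i t)=1-t^d$ via the factorization of $X^d-1$, which the paper leaves implicit.
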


\begin{proof}
By Proposition \ref{gf_chain}, $g_T(\zeta,\zeta^2,\ldots,\zeta^d)=h_n(\zeta,\zeta^2,\ldots,\zeta^d)$. By \eqref{complete},
\begin{align*}
\sum_{k=0}^{\infty}h_k(\zeta,\zeta^2,\ldots,\zeta^{d}) t^k=&\prod_{i=1}^d \frac{1}{1-\zeta^{i} t}\\
=&\frac{1}{1-t^d} = \sum_{k=0}^{\infty} t^{dk}.
\end{align*}
Therefore $h_n(\zeta,\zeta^2,\ldots,\zeta^{d})=1$ as $n=dm$.
\end{proof}

The following proposition describes the generating function for the number of $d$-flags of $T$-invariant subsets for a rooted tree $T$.

\begin{proposition}
\label{gf tree}
Let $T$ be a rooted tree (Fig. \ref{rooted tree}) with $N$ nodes and $k$ subtrees $T_1,\ldots, T_k$. Then
\begin{align}
\label{eq gf tree}
g_T(t_1,\ldots,t_d) =\sum_{i=1}^d t_i \prod_{j=1}^k  g_{T_j}(t_i,t_{i+1},\ldots,t_d),
\end{align}
where $g_{T_j}(t_i, t_{i+1},\ldots,t_d)$ is the generating function for the number of $(d-i+1)$-flags for the tree $T_j$.
\end{proposition}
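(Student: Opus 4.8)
The plan is to set up the identity by a single structural decomposition of each flag according to the level at which the root node $R$ first appears. The key fact, already implicit in the proof of Lemma~\ref{gf_tree}, is that every nonempty $T$-invariant subset of a rooted tree must contain $R$: since all edges point toward $R$, repeated application of $T$ to any node eventually reaches $R$, so $T$-invariance forces $R$ into any nonempty invariant subset. Consequently, in a flag $\emptyset = U_0 \subseteq U_1 \subseteq \cdots \subseteq U_d = [N]$ there is a unique smallest index $i$ with $R \in U_i$, and necessarily $U_0 = U_1 = \cdots = U_{i-1} = \emptyset$. Because $U_d = [N] \neq \emptyset$, this index $i$ ranges over $1, \ldots, d$, and it partitions the set of all flags into $d$ disjoint classes; this will produce the outer sum $\sum_{i=1}^d$.

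First I would fix such an $i$ and decompose each level along the root and the subtrees: writing $[N] = \{R\} \sqcup T_1 \sqcup \cdots \sqcup T_k$ as node sets, we have $U_\ell = (U_\ell \cap \{R\}) \sqcup \bigsqcup_{j=1}^k (U_\ell \cap T_j)$. I would verify that $U_\ell$ is $T$-invariant if and only if each $U_\ell \cap T_j$ is $T_j$-invariant and, in addition, $U_\ell \cap T_j \neq \emptyset$ forces $R \in U_\ell$ (because the root of $T_j$ maps to $R$). Combined with the previous paragraph, this shows that for $\ell < i$ every $U_\ell \cap T_j$ is empty, while for $\ell \geq i$ the sets $U_\ell \cap T_j$ range freely over increasing chains of $T_j$-invariant subsets. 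Thus the restriction of the flag to $T_j$ is precisely a $(d-i+1)$-flag $\emptyset = U_i \cap T_j \subseteq \cdots \subseteq U_d \cap T_j = T_j$, and the choices for distinct subtrees are independent, since the subtrees are node-disjoint and interact only through the requirement $R \in U_\ell$ for $\ell \geq i$, which is already guaranteed.

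Next I would carry out the weight bookkeeping. Writing $j_\ell = |U_\ell| - |U_{\ell-1}|$, the root $R$ contributes to exactly one increment, namely $\ell = i$, giving a single factor $t_i$; all remaining increments come from the subtrees. Reindexing the $T_j$-flag by $V_p = U_{i-1+p} \cap T_j$ for $0 \le p \le d-i+1$, its $p$-th increment equals the $(i-1+p)$-th increment of the original flag, so the variable in slot $p$ of $g_{T_j}$ is carried by $t_{i-1+p}$. Hence the $T_j$-part of the weight is exactly a monomial of $g_{T_j}(t_i, t_{i+1}, \ldots, t_d)$, the generating function for $(d-i+1)$-flags of $T_j$ with its variables substituted as $t_i, \ldots, t_d$. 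By the independence of the subtrees, summing over all flags whose root enters at level $i$ yields $t_i \prod_{j=1}^k g_{T_j}(t_i, t_{i+1}, \ldots, t_d)$, and summing over $i = 1, \ldots, d$ gives the claimed identity.

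The main obstacle I anticipate is not the combinatorial decomposition, which is forced, but the variable-shift bookkeeping in the last step: one must check carefully that a $(d-i+1)$-flag on a subtree contributes its generating function evaluated at $(t_i, \ldots, t_d)$ rather than at $(t_1, \ldots, t_{d-i+1})$, i.e. that the reindexing $V_p = U_{i-1+p} \cap T_j$ aligns slot $p$ of $g_{T_j}$ with the variable $t_{i-1+p}$. Making this substitution explicit and confirming degree consistency ($|U_d| = N$ splitting as $1 + \sum_j |T_j|$) is the step most prone to off-by-one errors, so I would state the reindexing as an explicit bijection before summing.
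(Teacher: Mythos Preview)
Your proposal is correct and follows essentially the same approach as the paper: partition flags by the least index $i$ with $R\in U_i$, observe that $U_0=\cdots=U_{i-1}=\emptyset$, and then decompose the remaining levels over the subtrees. Your treatment is in fact more careful than the paper's, which only sketches the argument and leaves the variable-shift bookkeeping implicit; the explicit reindexing $V_p=U_{i-1+p}\cap T_j$ you give is exactly what is needed to justify that the subtree contribution is $g_{T_j}(t_i,\ldots,t_d)$ rather than $g_{T_j}(t_1,\ldots,t_{d-i+1})$.
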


\begin{proof}
Let $\emptyset = U_0 \subseteq U_1 \subseteq \cdots \subseteq U_{d-1} \subseteq U_d=[N]$ be a flag for the tree $T$. The generating function $g_T(t_1,\ldots,t_d)$ is defined recursively. If the root node lies in $U_1$, we have $t_1$ times the product of the generating functions for each subtree $T_j (1 \leq j \leq k)$. If the root node doesn't lie in $U_1$, then $U_1$ is empty since each $U_i$ is $T$-invariant and all other nodes of the tree eventually fall into the root node. This way, we get the other terms appearing in \eqref{eq gf tree} depending on the least $i$, such that the root node lies in $U_i$.
\end{proof}
The next two results are two of the three main results of this section.
\begin{theorem}
\label{sigma_for_cycle}
Let $T$ be a cycle with $n$ nodes. Then
$$
\sigma(d;T)=g_T(\zeta, \zeta^2, \ldots, \zeta^d).
$$
\end{theorem}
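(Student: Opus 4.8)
The plan is to show that both sides of the claimed identity are equal to $d$. First I would evaluate the right-hand side. By Proposition~\ref{gf_cycle}, the generating function for a cycle on $n$ nodes is $g_T(t_1,\ldots,t_d)=t_1^n+\cdots+t_d^n$, so
$$
g_T(\zeta,\zeta^2,\ldots,\zeta^d)=\sum_{i=1}^d (\zeta^i)^n=\sum_{i=1}^d \zeta^{in}.
$$
Since $d\mid n$, write $n=dm$; then $\zeta^{in}=(\zeta^d)^{im}=1$ for every $i$, so the sum is $d$. Thus it suffices to prove $\sigma(d;T)=d$.

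Next I would set up a convenient model for the counting problem. Identifying $[n]$ with $\ZZ/n\ZZ$ so that the cycle $T$ is the translation $x\mapsto x+1$, we have $T^{j}W=W+j$. A subset $W$ of cardinality $m=n/d$ is $d$-splitting exactly when $\bigcup_{j=0}^{d-1}(W+j)=\ZZ/n\ZZ$. The key observation is a cardinality count: the $d$ translates $W,W+1,\ldots,W+(d-1)$ each have size $m$, so the total count with multiplicity is $dm=n=|\ZZ/n\ZZ|$. Hence the union can exhaust $\ZZ/n\ZZ$ \emph{only if} the translates are pairwise disjoint, and conversely disjointness of $d$ sets of total size $n$ forces them to cover everything. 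Therefore $W$ is $d$-splitting if and only if $W,W+1,\ldots,W+(d-1)$ form a partition of $\ZZ/n\ZZ$.

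Finally I would characterize which sets $W$ give such a partition. The translates $W+j$ and $W+j'$ (for $j\neq j'$, with $0\le j,j'\le d-1$) are disjoint precisely when no two elements of $W$ differ by an element of $\{\pm 1,\ldots,\pm(d-1)\}$ in $\ZZ/n\ZZ$; equivalently, listing the elements of $W$ in cyclic order, each cyclic gap between consecutive elements is at least $d$. Since there are $m$ such gaps and they sum to $n=dm$, every gap must equal $d$, so $W$ is forced to be a coset of the subgroup $\langle d\rangle=\{0,d,2d,\ldots,(m-1)d\}$. Conversely, every such coset manifestly yields a partition. As there are exactly $n/m=d$ cosets of $\langle d\rangle$ in $\ZZ/n\ZZ$, we conclude $\sigma(d;T)=d=g_T(\zeta,\zeta^2,\ldots,\zeta^d)$. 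The only delicate point is the gap argument together with the passage from ``covering'' to ``disjointness'' via the cardinality count; everything else is bookkeeping, and in particular $\sigma(d;T)>0$ holds automatically for a cycle, matching the absence of a positivity hypothesis in the statement.
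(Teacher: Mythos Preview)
Your proof is correct and follows the same overall strategy as the paper: compute each side separately and show both equal $d$. The evaluation of $g_T(\zeta,\zeta^2,\ldots,\zeta^d)$ via Proposition~\ref{gf_cycle} is identical to the paper's.

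For $\sigma(d;T)=d$, your argument differs in execution from the paper's. The paper proceeds constructively: it fixes a node $\tau$, argues that if $\tau\in W$ then the next $d-1$ nodes are forced into $TW,\ldots,T^{d-1}W$, and iterates around the cycle to exhibit $d$ distinct splitting subsets $W,W_1,\ldots,W_{d-1}$ indexed by the starting node, then asserts these are all. Your route is more structural: the identification with $\ZZ/n\ZZ$ and the cardinality observation (the $d$ translates have total size $dm=n$, so covering is equivalent to disjointness) reduce the problem cleanly to characterizing $W$ with all cyclic gaps $\ge d$, and the pigeonhole on the gap sum forces $W$ to be a coset of $\langle d\rangle$. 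This buys you a tighter argument with no case analysis and a transparent reason why there are exactly $d$ such $W$; the paper's version is more narrative and leaves the uniqueness step (``the cyclic structure of $T$ ensures that these are the only possible splitting subsets'') to the reader.
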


\begin{proof}
Let $W$ be a $d$-splitting subset for the cycle $T$. Fix a node $\tau$ on the cycle. If $\tau$ is in $W$, then the next $(d-1)$ nodes to $\tau$ lie in $TW, \ldots, T^{d-1}W$, respectively. The $d^{th}$ node to $\tau$, say $\tau_d$, may again belong to $W$. Then the next $(d-1)$ nodes to $\tau_d$ lie in $TW,\ldots,T^{d-1}W$ respectively. Continuing this, we obtain a splitting subset of cardinality $m$ since $n=dm$. For the next splitting subset, say $W_1$, we may begin with the node next to $\tau$, say $\tau_1$. Arguing as above, the $(d-1)$ nodes next to $\tau_1$ can not lie in $W_1$ as they belong to $TW_1,\ldots,T^{d-1}W_1$, respectively. Therefore, the second possible node in $W_1$ is $\tau_{d+1}$. Completing the cycle, we obtain another $d$-splitting subset, $W_1$, which is different from $W$ as $\tau_1 \in W_1$ but $\tau_1 \notin W$. Continuing the same argument, we obtain $(d-1)$ distinct $d$-splitting subsets $W_1,\ldots,W_{d-1}$ as $\tau_i \in W_i$ but $\tau_i \notin W_j$ for $i \neq j$. However, as $\tau_d$ belongs to $W$, the $d^{th}$ splitting subset, $W_d$, coincides with $W$ since $T$ is a cycle. Moreover, the cyclic structure of $T$ ensures that these are the only possible splitting subsets. Hence $\sigma(d;T)=d$.
Since $g_T(t_1,t_2,\ldots,t_d)=t_1^n+t_2^n+\cdots+t_d^n$ by Proposition \ref{gf_cycle}, we have that
\begin{align*}
g_T(\zeta,\zeta^2,\ldots,\zeta^{d})=& \zeta^n+\zeta^{2n}+\cdots+\zeta^{dn} =d,
\end{align*}
as $d$ divides $n$.
\end{proof}

\begin{theorem}
Let $T$ be a chain on $[n]$. Then
$$
\sigma(d;T)=g_T(\zeta, \zeta^2, \ldots, \zeta^d).
$$
\end{theorem}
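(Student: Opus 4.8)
The plan is to reduce the identity to the numerical fact that both sides equal $1$. The right-hand side is already settled: by Corollary \ref{zeta}, $g_T(\zeta,\zeta^2,\ldots,\zeta^d)=1$ for every chain on $[n]$ with $n=dm$. So it suffices to prove $\sigma(d;T)=1$, i.e. that a chain possesses exactly one $d$-splitting subset. I model the chain as $T(i)=i+1$ for $i<n$ and $T(n)=n$ on $\{1,\ldots,n\}$, so that $T^{j}(i)=\min(i+j,n)$.

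Let $W$ be a $d$-splitting subset; by definition $|W|=m$. The first step is a cardinality count that upgrades the covering condition to a partition: since $|T^jW|\le|W|=m$ and $\bigcup_{j=0}^{d-1}T^jW=[n]$ has $dm$ elements, subadditivity gives $dm=\left|\bigcup_{j=0}^{d-1}T^jW\right|\le\sum_{j=0}^{d-1}|T^jW|\le dm$, forcing $W,TW,\ldots,T^{d-1}W$ to be pairwise disjoint and each of size $m$. Using this, I determine $W$ by a peeling induction analogous to the one behind Proposition \ref{unique tree}. The source node $1$ has no $T$-preimage, and every value of $T^j$ with $j\ge1$ is at least $2$, so $1\notin T^jW$ for $j\ge1$ and hence $1\in W$; disjointness then forces $2=T(1),\ldots,d=T^{d-1}(1)$ out of $W$. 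Inductively, assuming $\ell d+1\in W$ and $\ell d+2,\ldots,(\ell+1)d\notin W$, the node $(\ell+1)d+1$ lies strictly below $n$, so it can only arise as $T^j(i)$ with $i=(\ell+1)d+1-j\in\{\ell d+2,\ldots,(\ell+1)d\}$ for $1\le j\le d-1$, each excluded from $W$; therefore $(\ell+1)d+1\in W$, and its first $d-1$ images push $(\ell+1)d+2,\ldots,(\ell+2)d$ out of $W$. This pins down $W=\{1,d+1,\ldots,(m-1)d+1\}$, so the splitting subset, if it exists, is unique.

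Existence is then a direct verification: for this arithmetic progression, $T^jW$ is exactly the set of nodes congruent to $1+j \pmod d$, and the union over $0\le j\le d-1$ is all of $[n]$, giving $\sigma(d;T)=1=g_T(\zeta,\zeta^2,\ldots,\zeta^d)$. The one point requiring care throughout is the self-loop at the terminal node $n$, which makes $T^j$ clamp: I expect this to be the main obstacle, but it is benign here because every node produced in the induction lies strictly below $n$ (so the preimage equation has no clamped solution), and the candidate $W$ reaches $n$ only at the final step, via $T^{d-1}((m-1)d+1)=dm=n$, exactly and without collision.
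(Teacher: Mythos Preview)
Your proof is correct and follows the same route as the paper: both argue that the source node of the chain must lie in any $d$-splitting subset $W$, propagate this constraint down the chain in blocks of length $d$ to conclude $\sigma(d;T)=1$, and then invoke Corollary~\ref{zeta} for the right-hand side. Your explicit cardinality argument forcing $W, TW, \ldots, T^{d-1}W$ to be pairwise disjoint is a welcome addition in rigor---the paper asserts that $\tau_2,\ldots,\tau_d$ cannot lie in $W$ somewhat informally, whereas your disjointness step makes this precise.
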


\begin{proof}
Let $W$ be a $d$-splitting subset. Beginning with the top node, say $\tau_1$ of the chain, we attempt to construct a $d$-splitting subset. If $\tau_1 \in W$, then the next $(d-1)$ nodes in the chain, say $\tau_2,\ldots,\tau_d$ belong to $TW,\ldots, T^{d-1}W$ respectively. The next possible node in $W$ is $\tau_{d+1}$. Again, $\tau_{d+2},\ldots,\tau_{2d}$ belong to $TW,\ldots, T^{d-1}W$ respectively. Since $n=dm$, the nodes $\tau_1,\tau_{d+1},\ldots,\tau_{(m-1)d+1}$ constitute a $d$-splitting subset for $T$. Note that the above-defined $d$-splitting subset $W$ is unique because the top node $\tau_1$ does not get fed by any other node, and so it must belong to $W$. This leaves a unique choice of nodes for $W$. Hence $\sigma(d;T)=1$. The result now follows by Corollary \ref{zeta}.
\end{proof}
The following proposition extends Lemma \ref{induc}.
\begin{proposition}
\label{tree 1}
For every tree $T$ which has a $d$-splitting subset, we have
$$
g_T(\zeta, \zeta^2,\ldots,\zeta^{d})=1,
$$
where $g_T(t_1,\ldots,t_d)$ is the generating function for the number of $d$-flags of $T$-invariant subsets.
\end{proposition}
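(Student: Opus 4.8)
The plan is to prove the proposition by induction on the number of chains of $T$, in exact parallel with the proof of Lemma \ref{induc}, replacing the substitution $t=-1$ by $(t_1,\dots,t_d)=(\zeta,\dots,\zeta^d)$. The engine is a reinterpretation of the evaluated generating function: recording a $d$-flag $\emptyset=U_0\subseteq\cdots\subseteq U_d=[N]$ by the labeling $f(v)=\min\{i:v\in U_i\}$ identifies $d$-flags of $T$-invariant subsets with labelings $f\colon[N]\to\{1,\dots,d\}$ satisfying $f(Tv)\le f(v)$ for all $v$. Since the monomial attached to a flag is $\prod_v t_{f(v)}$, evaluating at $t_i=\zeta^i$ gives
\[
g_T(\zeta,\zeta^2,\dots,\zeta^d)=\sum_{f}\zeta^{\sum_v f(v)},
\]
the sum running over all such labelings. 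I would first record, as a strengthening of Corollary \ref{zeta}, that $h_\ell(\zeta^b,\zeta^{b+1},\dots,\zeta^d)=1$ whenever $d\mid\ell$, for \emph{every} $b\in\{1,\dots,d\}$: from \eqref{complete} one has $\prod_{i=b}^{d}(1-\zeta^i t)^{-1}=\prod_{i=1}^{b-1}(1-\zeta^i t)\big/(1-t^d)$, and the coefficient of $t^{\ell}$ on the right is the constant term $1$ of the numerator, because that numerator has degree $b-1<d$.

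The base case is a single chain, where the statement is Corollary \ref{zeta}. For the inductive step, suppose $T$ is not a chain and fix a $d$-splitting subset $W$, so that $[N]=W\sqcup TW\sqcup\cdots\sqcup T^{d-1}W$ with all parts of equal size; thus each $T|_{T^iW}$ ($0\le i\le d-2$) is a bijection onto $T^{i+1}W$. Choose a branching node $p$ none of whose descendants is branching, so the subtrees hanging from $p$ are chains $C_1,\dots,C_r$ with $r\ge 2$. Writing $\ell$ for the index with $p\in T^{\ell}W$, the unique–preimage structure along each chain forces its levels to cycle through $0,1,\dots,d-1$, so the length of a chain is congruent mod $d$ to one more than the level of its top node. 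Covering of $p$ together with the injectivity above shows that at most one chain can have its top node at a level other than $d-1$; every other chain has top node at level $d-1$ and hence length divisible by $d$. As $r\ge 2$, some chain $C$ has $|C|\equiv 0\pmod d$. Setting $\widetilde T=T\setminus C$, the fact that $C$ meets each level in $|C|/d$ nodes and is covered entirely through its own leaf shows $W\setminus C$ is a $d$-splitting subset of $\widetilde T$, which has strictly fewer chains.

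It remains to see $g_T(\zeta,\dots,\zeta^d)=g_{\widetilde T}(\zeta,\dots,\zeta^d)$. In the labeling description a labeling of $T$ is a labeling of $\widetilde T$ together with an extension along $C$; if $C=(c_1\to\cdots\to c_{|C|}\to p)$ and $b=f(p)$, the admissible extensions are the sequences $d\ge f(c_1)\ge\cdots\ge f(c_{|C|})\ge b$, whose total $\zeta$-weight is exactly $h_{|C|}(\zeta^{b},\dots,\zeta^{d})=1$ by the identity above. Summing over the labelings of $\widetilde T$ yields $g_T(\zeta,\dots,\zeta^d)=g_{\widetilde T}(\zeta,\dots,\zeta^d)$, and the right-hand side equals $1$ by the induction hypothesis.

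The step I expect to be the crux is this grafting identity: the whole phenomenon rests on the weight $h_{|C|}(\zeta^{b},\dots,\zeta^d)$ contributed by the removed chain being $1$ \emph{independently of the boundary value} $b=f(p)$, so that the chain decouples cleanly from $\widetilde T$. The only other delicate point is the structural claim that a removable chain of length divisible by $d$ always exists and that its removal preserves the existence of a $d$-splitting subset; this is the precise analogue, via Lemma \ref{star}, of the ``one child odd, the others even'' dichotomy used for $d=2$ in Lemma \ref{induc}.
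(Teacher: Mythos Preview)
Your argument is correct and follows the same inductive scheme as the paper: induct on the number of chains, locate a leaf-chain $C$ of length divisible by $d$ attached at a deepest branching node, and remove it. The structural justification you give via the levels $W,TW,\ldots,T^{d-1}W$ is a clean alternative to the paper's appeal to Lemma~\ref{star}.

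The one substantive difference is the grafting step. You make explicit, through the labeling reformulation, that removing $C$ multiplies the evaluated generating function by $h_{|C|}(\zeta^{b},\ldots,\zeta^{d})$ where $b=f(p)$ is the label at the attachment point, and you prove directly from \eqref{complete} that this equals $1$ for \emph{every} $b$ when $d\mid |C|$. This identity is precisely the paper's Lemma~\ref{Riener}, which is proved there via the principal specialization $h_n(1,q,\ldots,q^m)=\gauss{m+n}{n}$ and a root-of-unity evaluation of the $q$-binomial. Notably, the paper's proof of Proposition~\ref{tree 1} only cites $g_C(\zeta,\ldots,\zeta^d)=1$ (the case $b=1$) and then asserts the conclusion; the full strength of Lemma~\ref{Riener} is in fact what is needed, and your write-up supplies that missing link. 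So your approach is the same in outline but more self-contained at the key step.
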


\begin{proof}
We will prove by induction on the number of chains of the tree $T$. If $T$ has one chain and a $d$-splitting subset exists for $T$, then $T$ has $dk$ nodes for some positive integer $k$ and $g_T(t_1,\ldots,t_d)$ is a complete homogeneous symmetric polynomial in $t_1,\ldots,t_d$ of degree $dk$. By Corollary \ref{zeta}, $g_T(\zeta, \zeta^2,\ldots,\zeta^{d})=1$. Suppose the result holds for all trees which consist of less than $k$ chains and have a $d$-splitting subset. Let $T$ be a tree which has $k$ chains such that a $d$-splitting subset exists for $T$. Then consider a chain $C$ beginning from one of the leaves of the tree and ending before the branching node such that $C$ has $dl$ number of nodes for some positive integer $l$, i.e. $T=\widetilde{T} \leftarrow C$, where $\widetilde{T}$ is a tree. Note that such a choice of $C$ is possible because whenever there is a branching that leads to the leaves, there is exactly one branch which has a $d$-splitting subset when joined with the branching node and other branches have $d$-splitting subsets (see Lemma \ref{star}). Then $\widetilde{T}$ is a tree for which a $d$-splitting subset exists and has less than $k$ chains. By the induction hypothesis, 
\begin{equation}
\label{one}
g_{\widetilde{T}}(\zeta, \zeta^2,\ldots,\zeta^{d})=1.
\end{equation}
Also 
\begin{equation}
\label{two}
g_C(\zeta, \zeta^2,\ldots,\zeta^{d})=1.
\end{equation}
Therefore \eqref{one} and \eqref{two} give us $g_T(\zeta, \zeta^2,\ldots,\zeta^{d})=1$.
\end{proof}

We immediately obtain the following corollary using the above result. We will use this corollary to prove Theorem \ref{thm 2}.

\begin{corollary}
\label{cor to star}
Let ${T}$ be a tree with $k$ nodes such that ${T}$ does not have a $d$-splitting subset. Let $\widetilde{T}$ be a rooted tree obtained by joining $T$ to a root node $R$ (i.e., $\widetilde{T}=T \rightarrow R$) such that $\widetilde{T}$ has a $d$-splitting subset, then
$$
\zeta ~g_{{T}}(\zeta, \zeta^2,\ldots, \zeta^d) +\zeta^2 g_{{T}}(\zeta^2,\ldots, \zeta^d) + \cdots + \zeta^{d-1} g_{{T}}(\zeta^{d-1}, \zeta^d) =0.
$$
\end{corollary}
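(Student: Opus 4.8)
The plan is to read the identity directly off the recursive generating-function formula of Proposition \ref{gf tree}, specialized to the single-subtree configuration $\widetilde{T}=T\rightarrow R$, and then to pin down its value at the roots of unity using Proposition \ref{tree 1}.

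First I would apply Proposition \ref{gf tree} to $\widetilde{T}$. Since $\widetilde{T}$ is a rooted tree whose only subtree is $T$ (that is, $k=1$ in the notation of that proposition), the product over subtrees is a single factor and the formula collapses to
$$
g_{\widetilde{T}}(t_1,\ldots,t_d)=\sum_{i=1}^d t_i\, g_T(t_i,t_{i+1},\ldots,t_d),
$$
where $g_T(t_i,\ldots,t_d)$ is the generating function for the number of $(d-i+1)$-flags of $T$-invariant subsets. Substituting $t_\ell=\zeta^\ell$ for each $\ell$ and peeling off the top term $i=d$ gives
$$
g_{\widetilde{T}}(\zeta,\zeta^2,\ldots,\zeta^d)=\sum_{i=1}^{d-1}\zeta^i\, g_T(\zeta^i,\ldots,\zeta^d)+\zeta^d\, g_T(\zeta^d).
$$

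Next I would evaluate that top term. For $i=d$ the relevant object is the $1$-flag generating function of $T$; since the only $1$-flag of $T$-invariant subsets is $\emptyset\subseteq[k]$, this generating function is $t_d^{\,k}$, so $g_T(\zeta^d)=(\zeta^d)^k=1$ because $\zeta^d=1$. Hence the peeled-off term equals $\zeta^d\cdot 1=1$. This term plays exactly the role of the constant summand $1$ in the $d=2$ relation $g_{\widetilde{T}}(t)=1+t\,g_T(t)$ used in Corollary \ref{add_node}.

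Finally, since $\widetilde{T}$ has a $d$-splitting subset, Proposition \ref{tree 1} gives $g_{\widetilde{T}}(\zeta,\zeta^2,\ldots,\zeta^d)=1$. Feeding both facts into the displayed identity yields
$$
1=\sum_{i=1}^{d-1}\zeta^i\, g_T(\zeta^i,\ldots,\zeta^d)+1,
$$
and subtracting $1$ from both sides gives the claimed vanishing. There is essentially no hard step here; the only point to get right is the bookkeeping of the top ($i=d$) term of the recursion, which contributes the constant $1$. I would also remark that the hypothesis that $T$ itself has no $d$-splitting subset is not actually used in this algebraic identity; it is retained only to describe the configuration in which the corollary is later invoked in the proof of Theorem \ref{thm 2}.
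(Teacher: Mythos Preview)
Your proof is correct and follows essentially the same route as the paper: apply the recursion of Proposition~\ref{gf tree} to $\widetilde{T}=T\rightarrow R$, use Proposition~\ref{tree 1} to evaluate $g_{\widetilde{T}}(\zeta,\ldots,\zeta^d)=1$, and note that the $i=d$ term contributes $\zeta^d g_T(\zeta^d)=(\zeta^d)^{k+1}=1$. Your observation that the hypothesis ``$T$ has no $d$-splitting subset'' is not used in the algebraic argument is also accurate.
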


\begin{proof}
The generating function for the tree $\widetilde{T}$ is
$$
g_{\widetilde{T}}(t_1,\ldots,t_d)=t_1 ~g_{{T}}(t_1,\ldots,t_d) + t_2~ g_{{T}}(t_2,\ldots,t_d) + \cdots + t_{d-1} ~g_{{T}}(t_{d-1},t_d) +t_d ~g_{{T}} (t_d).
$$
Since $\widetilde{T}$ is a tree which has a $d$-splitting subset, by Proposition \ref{tree 1}, $g_{\widetilde{T}}(\zeta,\ldots,\zeta^d)=1$. Also, $g_{{T}}(t_d)$ is the generating function for the number of $1$-flags on $[k]$, so $g_{{T}}(t_d)=t_d^k$. Therefore $\zeta^d g_{{T}} (\zeta^d)=(\zeta^d)^{k+1}=1$, and the proposition follows.
\end{proof}
The next result together with Proposition \ref{tree 1} shows that if $T$ is a tree such that $\sigma(d;T)>0$, then $\sigma(d;T)=g_T(\zeta, \zeta^2,\ldots,\zeta^{d})$.
\begin{proposition}
If a $d$-splitting subset exists for a tree, then it is unique.
\end{proposition}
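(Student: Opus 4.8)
The plan is to prove uniqueness by the same peeling strategy used for the $d=2$ case in Proposition \ref{unique tree}, organised as an induction on the number of chains of the tree, in the spirit of Proposition \ref{tree 1}. First I would record two structural facts about any $d$-splitting subset $W$ of a tree $T$ on $[N]$. Since $|W|=N/d$ and $|T^iW|\le |W|$ for every $i$, the covering $W\cup TW\cup\cdots\cup T^{d-1}W=[N]$ forces $N=|[N]|\le \sum_{i=0}^{d-1}|T^iW|\le N$ to be an equality throughout; hence the sets $W,TW,\ldots,T^{d-1}W$ are pairwise disjoint, each of size $N/d$, and they partition $[N]$. This lets me set $B_i=T^iW$ and assign to each node $v$ its block index $c(v)\in\{0,\ldots,d-1\}$ with $v\in B_{c(v)}$; here $c(v)=0$ exactly when $v\in W$, and $T(B_i)=B_{i+1}$ for $i\le d-2$. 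The second fact is that every leaf lies in $W$: a leaf is not in the image of $T$, so it cannot lie in any $T^iW$ with $i\ge 1$, and therefore must lie in $W$ in order to be covered.

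For the base case, if $T$ is a single chain then a $d$-splitting subset is unique by the theorem for chains proved above: its top node is a leaf, hence forced into $W$, and this determines the remaining members as every $d$-th node. For the inductive step, I would assume uniqueness for all trees with fewer than $k$ chains and let $T$ have $k\ge 2$ chains. Choose a deepest branching node $b$, so that every branch entering $b$ is a chain. By Lemma \ref{star}, applied to the rooted tree consisting of $b$ together with the branches above it, exactly one of these branches has length $\equiv d-1 \pmod d$ (the one that becomes $d$-splittable once $b$ is adjoined), while each of the remaining branches has length $\equiv 0 \pmod d$; since there are at least two branches, at least one of the latter exists. Pick such a remaining branch $C$: it is a chain whose top is a leaf of $T$ and whose length is divisible by $d$.

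On $C$ the intersection $W\cap C$ is forced, because the top of $C$ is a leaf, hence lies in $W$, and the uniqueness of the $d$-splitting of a chain then pins down $W\cap C$ as every $d$-th node, with the bottom node of $C$ receiving colour $d-1$. The key point is that $C$ is self-contained: the covering translates $T^iW$ for $i\le d-1$ never need to reach $b$, because $b$ receives its colour $d-1$ from the distinguished branch adjoined to it, not from $C$. Consequently, writing $\widetilde T=T\setminus C$ (which is $T$-closed, since in a tree no node maps upward into a topmost chain), the restriction $W\cap\widetilde T$ has size $|\widetilde T|/d$ and $d$-splits $\widetilde T$. As $\widetilde T$ has fewer chains, the induction hypothesis makes $W\cap\widetilde T$ unique, and combining it with the forced $W\cap C$ shows that $W$ is unique. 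The main obstacle I anticipate is precisely the bookkeeping at the branching node $b$: I must verify, using Lemma \ref{star} together with the block structure above, that exactly one incoming branch shares the coverage of $b$ while the others are self-contained, so that deleting a self-contained branch leaves a genuine $d$-splitting subset of a smaller tree and does not disturb the forced colours.
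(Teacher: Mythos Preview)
Your block-structure paragraph is correct and is exactly the tool you need. The gap is in how you invoke Lemma \ref{star} at the deepest branching node $b$. Lemma \ref{star} characterises when a rooted tree has a $d$-splitting subset, so to apply it to the subtree $S=\{b\}\cup C_1\cup\cdots\cup C_r$ you would first have to know that $S$ itself admits a $d$-splitting subset. This is not implied by $T$ having one, and your conclusion that exactly one branch has length $\equiv d-1\pmod d$ while the others have length $\equiv 0\pmod d$ can fail. Take $d=3$ and the tree on $[9]$ with $1\to 2\to 3\to 4$, $5\to 4$, $4\to 6\to 7\to 8\to 9\to 9$. Here $W=\{1,5,7\}$ is a $3$-splitting subset, the branching node is $b=4$, and the two incoming chains have lengths $3$ and $1$; neither has length $\equiv 2\pmod 3$, and the subtree $S=\{1,2,3,4,5\}$ has $5$ nodes, so it cannot be $3$-split.

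What does hold, and is all you need, is the weaker statement that at least one incoming chain has length divisible by $d$. You can get this straight from your block structure: since $|T^{i+1}W|=|T^iW|$ for $i\le d-2$ and the blocks $T W,\ldots,T^{d-1}W$ are pairwise disjoint, $T$ is injective on $W\cup TW\cup\cdots\cup T^{d-2}W$. Hence among the $r\ge 2$ preimages of $b$, at most one can have colour $<d-1$, so at least one has colour $d-1$; the chain above that preimage starts at a leaf (colour $0$) and, by your colour-propagation, must therefore have length $\equiv 0\pmod d$. With this in hand your peeling of $C$ and the induction go through exactly as you wrote (your remark that ``$b$ receives colour $d-1$'' should be dropped; $b$ can have any colour, but this is irrelevant once you know $T^i(W\cap C)\subseteq C$ for $i\le d-1$).

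For comparison, the paper avoids the chain decomposition altogether: it simply notes that every leaf lies in $W$, removes each leaf together with the $d-1$ nodes following it (the same injectivity shows these $d$-blocks do not collide), obtains a forest of smaller trees each of which is again $d$-split, and repeats. That is a shorter route to the same uniqueness.
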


\begin{proof}
Let $T$ be a tree such that $\sigma(d;T)>0$. Let $W$ be a $d$-splitting subset. Since the leaves of the tree are not fed by any other node, they must belong to $W$. Then, the subsequent $(d-1)$ nodes after each leaf can not be in $W$ as they lie in $TW, T^2W,\ldots,T^{d-1}W$, respectively. Cut off the leaves along with the $(d-1)$ nodes following them. This gives us a forest. In the forest, repeat the above procedure with the trees which are not chains with $d$ nodes. After a finite number of steps, we will be left with only chains with $d$ nodes, and $W$ consists of all the top nodes (i.e. the leaves in the final forest). This proves that there is a unique choice of $W$ if $\sigma(d;T)>0$.
\end{proof}

\section{Endofunctions on finite sets}
\label{sec 4}
In this section, we extend the results of the previous section to endofunctions, which have some specific structures. We begin with describing the generating function for the number of $d$-flags of $T$-invariant subsets for a general endofunction $T$.
\begin{lemma}
\label{gf cycle tree}
Let $T$ be an endofunction on $[N]$ with $s$ connected components such that there are $r_i$ nodes in the cycle of $i^{th}$ component. Let $T_{i,1}, T_{i,2},\ldots, T_{i,k_i}$ be the trees attached to the cycle of $i^{th}$ component. Then 
$$
g_T(t_1,\ldots,t_d) = \prod_{i=1}^s \left( \sum_{l=1}^d t_l^{r_i} \prod_{j=1}^{k_i} g_{T_{i,j}}(t_l,t_{l+1},\ldots,t_d) \right),
$$
where $g_{T_{i,j}}(t_l, t_{l+1},\ldots,t_d)$ is the generating function for the number of $(d-l+1)$-flags for the tree $T_{i,j}$.
\end{lemma}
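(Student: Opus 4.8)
The plan is to prove the formula in two stages: first reduce to a single connected component by showing that the flag generating function is multiplicative over the connected components of the functional graph of $T$, and then establish the single-component formula by generalizing the rooted-tree recursion of Proposition \ref{gf tree}, treating the entire central cycle as one ``super-root'' that is forced to enter the flag all at once.

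For the first stage I would let $C^{(1)},\ldots,C^{(s)}$ be the connected components and observe that restricting a $d$-flag $\emptyset=U_0\subseteq\cdots\subseteq U_d=[N]$ of $T$-invariant subsets to each $C^{(i)}$ yields a $d$-flag of $T|_{C^{(i)}}$-invariant subsets, and conversely any choice of one flag per component glues to a flag on $[N]$ (invariance is a local condition, since $T$ maps each component into itself). Because the exponent of $t_l$ in the monomial $t^J$ records $|U_l|-|U_{l-1}|$, which splits as a sum over components, the monomial factorizes and hence $g_T=\prod_{i=1}^s g_{T|_{C^{(i)}}}$. This isolates the problem to proving, for a single component with cycle length $r$ and attached trees $T_1,\ldots,T_k$, the identity $g_{T|_{C^{(i)}}}(t_1,\ldots,t_d)=\sum_{l=1}^d t_l^{\,r}\prod_{j=1}^k g_{T_j}(t_l,\ldots,t_d)$.

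For the second stage the key observation is that every nonempty $T$-invariant subset $U$ of a single component contains all $r$ cycle nodes: any $v\in U$ has forward orbit $v,Tv,T^2v,\ldots$ lying in $U$, which eventually enters the cycle and then visits every cycle node. Given a flag on the component, I would let $l$ be the least index with $U_l\neq\emptyset$; then all $r$ cycle nodes first appear at level $l$, contributing the factor $t_l^{\,r}$ as a block. Since every off-cycle node feeds into the cycle, it can appear only at a level $\geq l$, so the restriction of the flag to each tree $T_j$ is a genuine $(d-l+1)$-flag of $T_j$-invariant subsets; here I would check that $U_p\cap T_j$ is $T_j$-invariant, the only delicate point being the root of $T_j$, whose image is a cycle node that is already present, so the self-loop convention causes no trouble. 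Conversely, a choice of entry level $l$ together with a $(d-l+1)$-flag on each $T_j$ reconstructs a unique flag on the component, and one verifies directly that each reconstructed $U_p$ is $T$-invariant. Since $t^J$ factorizes as $t_l^{\,r}$ times the product of the tree monomials in the variables $t_l,\ldots,t_d$, summing over $l$ and over all tree-flags gives the single-component formula; combining the two stages yields the lemma.

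The main obstacle I anticipate is not a single computation but the bookkeeping of this bijection in the second stage: one must argue carefully that the cycle is forced in as a single block, that no off-cycle node can precede the cycle in the flag, and that the reconstruction respects $T$-invariance across the cycle-tree interface. These are precisely the points at which the cycle argument of Proposition \ref{gf_cycle} and the tree recursion of Proposition \ref{gf tree} must be fused, and treating the tree roots correctly under the self-loop convention is the subtle step.
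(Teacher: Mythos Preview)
Your proposal is correct and follows essentially the same argument as the paper: reduce to a single connected component by multiplicativity, then partition flags according to the least index $l$ with $U_l\neq\emptyset$, noting that $T$-invariance forces all $r$ cycle nodes to enter at level $l$ simultaneously (contributing $t_l^{\,r}$) and that the residual choices on each attached tree $T_j$ amount to an independent $(d-l+1)$-flag counted by $g_{T_j}(t_l,\ldots,t_d)$. Your write-up is in fact more explicit about the bijection and the self-loop convention at the tree roots than the paper's own proof.
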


\begin{proof}
Since distinct connected components are independent of each other, we get the product of the generating functions for each connected component. Therefore, it is enough to consider an endofunction with one component.  Let $T$ be an endofunction with a cycle having $r$ nodes, and let $T_1,\ldots,T_k$ be $k$ trees attached to the $r$ nodes of the cycle. Let $\emptyset = U_0 \subseteq U_1 \subseteq \cdots \subseteq U_{d-1} \subseteq U_d=[N]$ be a flag of $T$-invariant subsets. If $U_1$ contains a node from the cycle, then since $U_1$ is $T$-invariant, $U_1$ must have all the cycle nodes. Therefore we get $t_1^{r}$ times the product of the generating functions of the trees attached to the cycle as the nodes of the cycle may serve as the root nodes of the trees. In this case, we get $t_1^r \prod_{j=1}^k g_{T_j}(t_1,\ldots,t_d)$. If $U_1$ doesn't contain a node of the cycle, then since $U_1$ is $T$-invariant and all the trees feed into the nodes of the cycle, $U_1$ must be empty. If $U_2$ contains a node of the cycle, then arguing as above, we obtain $t_2^r \prod_{j=1}^k g_{T_j}(t_2,\ldots,t_d)$. We continue this procedure and obtain all the terms in the sum $\sum_{l=1}^d t_l^r \prod_{j=1}^k g_{T_j}(t_l,\ldots,t_d)$ depending on the first $U_l$, which contains a node from the cycle.
\end{proof}

The following lemma will be useful to prove Theorem \ref{thm 1}.
\begin{lemma}
\label{Riener}
Let $n=dk$ for some positive integers $d$ and $k$. Then for all $1<l \leq d$, we have
$$
h_n(\zeta^l,\zeta^{l+1},\ldots,\zeta^d)=1,
$$
where $h_n$ is the complete homogeneous symmetric polynomial of degree $n$ in $(d-l+1)$ variables.
\end{lemma}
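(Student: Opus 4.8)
The plan is to mimic the generating-function argument used to prove Corollary \ref{zeta}, but now tracking only the ``tail'' of variables $\zeta^l, \zeta^{l+1}, \ldots, \zeta^d$. First I would apply the generating function \eqref{complete} for the complete homogeneous symmetric polynomials, taking the $d-l+1$ variables to be $\zeta^l, \zeta^{l+1}, \ldots, \zeta^d$, to obtain
$$
\sum_{m=0}^{\infty} h_m(\zeta^l,\ldots,\zeta^d)\, t^m = \prod_{j=l}^{d} \frac{1}{1 - \zeta^j t}.
$$
It then suffices to extract the coefficient of $t^n$ from the right-hand side and show it equals $1$.

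The key observation is that, since $\zeta$ is a primitive $d^{th}$ root of unity, the numbers $\zeta^1, \zeta^2, \ldots, \zeta^d$ are precisely the $d$ distinct $d^{th}$ roots of unity, so $\prod_{j=1}^{d}(1 - \zeta^j t) = 1 - t^d$. Factoring out the ``missing'' initial factors, I would write
$$
\prod_{j=l}^{d} \frac{1}{1 - \zeta^j t} = \frac{\prod_{j=1}^{l-1}(1 - \zeta^j t)}{\prod_{j=1}^{d}(1 - \zeta^j t)} = \frac{P(t)}{1 - t^d},
$$
where $P(t) = \prod_{j=1}^{l-1}(1 - \zeta^j t)$ is a polynomial of degree $l-1$ whose constant term is $1$.

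Finally, using $\frac{1}{1-t^d} = \sum_{k\ge 0} t^{dk}$, I observe that if $P(t)=\sum_i p_i t^i$, then the coefficient of $t^n$ in $\frac{P(t)}{1-t^d}$ (with $n=dk$) equals $\sum p_i$ over those indices $i$ with $0 \le i \le \deg P$ and $i \equiv 0 \pmod d$. The hypothesis $1 < l \le d$ forces $\deg P = l-1 \le d-1 < d$, so the only degree of $P$ divisible by $d$ is $i=0$, which contributes the constant term $p_0 = 1$. Hence $h_n(\zeta^l,\ldots,\zeta^d)=1$, as claimed.

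I do not anticipate a serious obstacle here; the whole argument reduces to elementary manipulation of the generating function. The only point requiring care is the degree bound $\deg P = l-1 < d$, which is exactly what the hypothesis $l > 1$ (together with $l \le d$) supplies, and which is precisely what guarantees that no term of $P(t)$ other than its constant term lands on a multiple of $d$. This is what makes the tail sum collapse to $1$ rather than to $d$, in contrast with the full-variable case of Corollary \ref{zeta}.
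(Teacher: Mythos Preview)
Your argument is correct and self-contained, but it takes a different route from the paper's proof. The paper first uses homogeneity to write $h_n(\zeta^l,\ldots,\zeta^d)=\zeta^{ln}\,h_n(1,\zeta,\ldots,\zeta^{d-l})$, then invokes the principal specialization $h_n(1,q,\ldots,q^m)=\gauss{m+n}{n}$ and a known evaluation of the $q$-binomial coefficient at a primitive $d$th root of unity from Reiner--Stanton--White \cite{MR2087303}. Your approach instead stays entirely within the generating-function framework of \eqref{complete}: you multiply and divide by the missing factors $\prod_{j=1}^{l-1}(1-\zeta^j t)$ to produce $P(t)/(1-t^d)$, and then the degree bound $\deg P=l-1<d$ forces the only contribution to the coefficient of $t^{dk}$ to come from the constant term of $P$. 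This is more elementary---it avoids the $q$-binomial machinery altogether---while the paper's route makes the link to $q$-analogues and cyclic sieving explicit.

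Two minor remarks on your commentary (neither affects the proof). First, the crucial inequality $\deg P<d$ follows from $l\le d$ alone; the hypothesis $l>1$ plays no role, and in fact your argument at $l=1$ (where $P(t)=1$) recovers Corollary~\ref{zeta} verbatim. Second, Corollary~\ref{zeta} also yields the value $1$, not $d$, so there is no contrast; you may be thinking of the cycle case in Theorem~\ref{sigma_for_cycle}, which concerns the power sum $t_1^n+\cdots+t_d^n$ rather than $h_n$.
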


\begin{proof}
Note that
$$
h_n(\zeta^l,\zeta^{l+1},\ldots,\zeta^d) = \zeta^{ln}~ h_n(1,\zeta,\ldots,\zeta^{d-l}).
$$
Let $d-l=m$. Then $m<d$. It is enough to prove that $h_n(1,\zeta,\ldots,\zeta^{m})=1$ for $m<d$, since $\zeta^{ln}=1$. Consider the principal specialization of the homogeneous symmetric polynomial. By \cite[Prop. 7.8.3]{MR1676282},
$$
 h_n(1,q,\ldots,q^{m}) = \gauss{m+n}{n},
$$
where $\gauss{}~{}$ denotes the $q$-binomial coefficient. To prove the result, it suffices to show that $\gauss{m+n}{n}$ at $q=\zeta$ is $1$. By \cite[Prop. 4.2 (iii)]{MR2087303},
$$
{\genfrac{[}{]}{0pt}{}{m+n}{n}_{q=\zeta}} = {{n/d + \lfloor m/d \rfloor}\choose{\lfloor m/d \rfloor}} = {n/d \choose 0} =1.
$$
\end{proof}
With the aid of the above lemma, we obtain the following result, which extends Proposition \ref{tree 1}.
\begin{proposition}
\label{zeta_hn}
Let $T$ be a tree for which a $d$-splitting subset exists and $l$ be a positive integer such that $1<l \leq d$. Let $g_T(t_l,t_{l+1},\ldots,t_d)$ be the generating function for the number of $(d-l+1)$-flags of $T$-invariant subsets. Then
$$
g_T(\zeta^l,\zeta^{l+1},\ldots,\zeta^d)=1.
$$
\end{proposition}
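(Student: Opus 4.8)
The plan is to mirror the induction on the number of chains used in Proposition \ref{tree 1}, the only structural novelties being that the variables are now $t_l,\ldots,t_d$ rather than $t_1,\ldots,t_d$ and that the role played there by Corollary \ref{zeta} is played here by Lemma \ref{Riener}. A useful preliminary observation is that every $(d-l+1)$-flag has top set of cardinality $N=|[N]|$, so $g_T(t_l,\ldots,t_d)$ is homogeneous of degree $N$; since $T$ has a $d$-splitting we have $d\mid N$, whence
$$
g_T(\zeta^l,\ldots,\zeta^d)=\zeta^{lN}g_T(1,\zeta,\ldots,\zeta^{d-l})=g_T(1,\zeta,\ldots,\zeta^{d-l}).
$$
This reduces the claim to a truncated evaluation; for a single chain on $dk$ nodes Lemma \ref{Riener} gives the base case value $1$ directly.

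For the inductive step I would decompose $T$ at its root $R$ using Proposition \ref{gf tree}, writing
$$
g_T(t_l,\ldots,t_d)=\sum_{i=l}^{d} t_i\prod_{j=1}^{k} g_{T_j}(t_i,\ldots,t_d),
$$
where $T_1,\ldots,T_k$ are the subtrees at $R$. By Lemma \ref{star} there is a unique subtree $T_{i_0}$ whose extension $\widetilde{T_{i_0}}$ has a $d$-splitting while $T_{i_0}$ itself does not, and every other $T_j$ has a $d$-splitting. When $R$ has at least two subtrees ($k\ge 2$), each $T_j$ with $j\ne i_0$ has strictly fewer chains than $T$, so the induction hypothesis (together with $g_{T_j}(\zeta^d)=(\zeta^d)^{|T_j|}=1$ for the last variable) gives $g_{T_j}(\zeta^i,\ldots,\zeta^d)=1$ for every $i\ge l\ge 2$. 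Hence all factors with $j\ne i_0$ collapse to $1$, and the surviving terms reassemble, via Proposition \ref{gf tree} applied to $\widetilde{T_{i_0}}=T_{i_0}\to R$, into $g_{\widetilde{T_{i_0}}}(\zeta^l,\ldots,\zeta^d)$. As $\widetilde{T_{i_0}}$ has a $d$-splitting and strictly fewer chains than $T$, the induction hypothesis closes this case with value $1$.

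The single obstacle is the case $k=1$, where $R$ has a unique subtree $T_1$; here $\widetilde{T_1}=T$, so the reassembly above is circular. Separating the $i=d$ summand, which equals $\zeta^d g_{T_1}(\zeta^d)=(\zeta^d)^{|T_1|+1}=1$, the recursion yields
$$
g_T(\zeta^l,\ldots,\zeta^d)=1+\sum_{i=l}^{d-1}\zeta^i\,g_{T_1}(\zeta^i,\ldots,\zeta^d),
$$
so it remains to show that the tail sum vanishes, and I expect this to be the crux. The $l=1$ instance of the required vanishing is exactly Corollary \ref{cor to star}; for general $l$ I would prove the sharper \emph{companion} statement that $g_{T_1}(\zeta^i,\ldots,\zeta^d)=0$ for every $2\le i\le d-1$ whenever $T_1$ is a tree, without a $d$-splitting, whose one-point extension $\widetilde{T_1}$ does split.

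To establish the companion I would again use homogeneity: the vanishing is equivalent to $g_{T_1}(1,\zeta,\ldots,\zeta^{e})=0$ for $1\le e\le d-2$, and since such $T_1$ necessarily satisfies $|T_1|\equiv -1 \pmod d$, the chain base case follows from the same $q$-Lucas evaluation used in Lemma \ref{Riener}: the relevant $q$-binomial is $\binom{|T_1|+e}{e}_{q=\zeta}$, whose bottom Lucas factor is $\binom{e-1}{e}=0$. I would then propagate this companion vanishing through the identical root decomposition and the dichotomy of Lemma \ref{star}, running the companion induction in tandem with the main induction on the number of chains. Pinning down this companion statement as the correct shifted generalization of Corollary \ref{cor to star}, and checking that it survives the root decomposition in all configurations of splitting and non-splitting subtrees, is the part I expect to require the most care.
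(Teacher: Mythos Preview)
Your approach differs materially from the paper's. The paper mirrors Proposition \ref{tree 1} verbatim: it peels off a leaf-chain $C$ of length divisible by $d$ (whose existence is guaranteed by Lemma \ref{star}), observes via Lemma \ref{Riener} that $g_C(\zeta^i,\ldots,\zeta^d)=1$ for \emph{every} $i$ with $1<i\le d$, and concludes that the recursive formula for $g_T$ at the $\zeta$-values coincides with that for $g_{\widetilde T}=T\setminus C$, which has strictly fewer chains. No root decomposition, no $k=1$/$k\ge 2$ split, and no companion statement are needed.

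Your route via the root decomposition of Proposition \ref{gf tree} is more elaborate, and the part you flag as delicate is in fact a genuine gap. The companion claim $g_{T_1}(\zeta^i,\ldots,\zeta^d)=0$ for $2\le i\le d-1$ is true, but it does \emph{not} propagate term-by-term through the root decomposition of $T_1$ as you suggest. If $r_1$ has a single child (so $T_1$ has a unique subtree $T_2$), then $|T_2|\equiv -2\pmod d$, so $T_2$ fails the companion hypothesis and $g_{T_2}(\zeta^i,\ldots,\zeta^d)$ is typically nonzero: for $d=3$ with $T_2$ a single node, $g_{T_2}(\zeta^2,\zeta^3)=\zeta^2+1=-\zeta$. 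Thus the recursion $g_{T_1}(\zeta^i,\ldots,\zeta^d)=\sum_{p=i}^d \zeta^p g_{T_2}(\zeta^p,\ldots,\zeta^d)$ does not vanish term-by-term; it vanishes only after cancellation. To push this through you would need a full family of ``companion-$j$'' identities giving the exact values of $g_{T_j}(\zeta^i,\ldots,\zeta^d)$ for each residue $|T_j|\equiv -j\pmod d$, and Lemma \ref{star} gives no structural control over these intermediate trees (it applies only to trees that \emph{do} split). The paper's leaf-side peel avoids this cascade entirely because the removed chain always has size divisible by $d$.
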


\begin{proof}
We prove by induction on the number of chains of $T$. If $T$ has one chain and a $d$-splitting subset exists for $T$, it has $dk$ number of nodes for some positive integer $k$. In this case, $g_T(t_l,t_{l+1},\ldots, t_d)$ is a complete homogeneous symmetric polynomial of degree $dk$ in the variables $t_l, t_{l+1},\ldots, t_d$. By Lemma \ref{Riener}, $g_T(\zeta^l,\zeta^{l+1},\ldots,\zeta^d)=1$. Suppose the result holds for all trees with less than $k$ chains and a $d$-splitting subset. Let $T$ be a tree for which a $d$-splitting subset exists such that $T$ has $k$ chains. Then we can decompose $T$ as $T=\widetilde{T} \leftarrow C$, where $C$ is a chain beginning from a leaf of the tree and running upto its branching node (branching node excluded) such that $C$ has $ds$ number of nodes for some positive integer $s$, and $\widetilde{T}$ is a tree. Note that such a choice is possible because whenever a branching occurs in $T$, exactly one branch will have a $d$-splitting subset together with the branching node, and other branches will have $d$-splitting subsets (Lemma \ref{star}). $\widetilde{T}$ is a tree with a $d$-splitting subset and less than $k$ chains. By the induction hypothesis, $g_{\widetilde{T}}(\zeta^l,\zeta^{l+1},\ldots,\zeta^d)=1$. Since $g_{C}(\zeta^l,\zeta^{l+1},\ldots,\zeta^d)=1$, the result follows.
\end{proof}

The following two theorems are the main results of this section.
\begin{theorem}
\label{thm 1}
Let $T$ be an endofunction on $[n]$ with a cycle having $ds$ nodes and $k$ trees attached to the nodes of the cycle such that each tree has a $d$-splitting subset. Then
$$
\sigma(d;T)=g_T(\zeta, \zeta^2,\ldots,\zeta^d).
$$
\end{theorem}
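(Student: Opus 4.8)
The plan is to show that both sides of the asserted identity equal $d$. Since the hypothesis specifies a single cycle, $T$ is connected, and Lemma~\ref{gf cycle tree} (applied to one component whose cycle has $ds$ nodes) gives
$$
g_T(t_1,\ldots,t_d)=\sum_{l=1}^d t_l^{ds}\prod_{j=1}^k g_{T_j}(t_l,t_{l+1},\ldots,t_d).
$$
First I would evaluate the right-hand side of the theorem. Setting $t_i=\zeta^i$, the cycle factor becomes $t_l^{ds}=\zeta^{lds}=(\zeta^d)^{ls}=1$ because $\zeta^d=1$. Each $T_j$ has a $d$-splitting subset by hypothesis, so Proposition~\ref{tree 1} (for $l=1$) and Proposition~\ref{zeta_hn} (for $1<l\le d$) yield $g_{T_j}(\zeta^l,\ldots,\zeta^d)=1$ for every $j$ and every $l$. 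Hence each of the $d$ summands equals $1$, and $g_T(\zeta,\zeta^2,\ldots,\zeta^d)=d$.

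It then remains to prove the combinatorial identity $\sigma(d;T)=d$. I would encode a candidate splitting subset $W$ by the coloring $\phi\colon[n]\to\{0,1,\ldots,d-1\}$ with $\phi(x)=i$ whenever $x\in T^iW$; then $W$ is $d$-splitting precisely when $T^iW=\phi^{-1}(i)$ for all $i$, i.e. when the color classes all have size $n/d$ and $\phi$ is consistent with $T$. The first observations are that every leaf must lie in $W$ (nothing maps to a leaf, so it cannot lie in any $T^iW$ with $i\ge1$), giving leaves color $0$, and that any node of color $j\ge1$ must possess a preimage of color $j-1$, whence a node of color $j<d-1$ forces its image to have color $j+1$. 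Propagating these constraints through each attached tree reproduces the unique coloring guaranteed by the uniqueness of a tree's $d$-splitting subset; in particular the tree coloring does not depend on the cycle.

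The crux is the behaviour at the cycle, and the key structural input is that the root of each tree $T_j$ receives color $d-1$. I would prove this from the standalone tree $T_j$: there its root carries a self-loop, and if the root had color $p<d-1$ it would lie in both $T^pW$ and $T^{p+1}W$, contradicting disjointness, so $p=d-1$; the same within-tree propagation forces color $d-1$ at the root inside $T$. Consequently, when a root of color $d-1$ maps to its cycle node, it lands in the ``overflow'' $T^dW$ and imposes no constraint there. On the cycle itself $T$ acts bijectively, so a cycle node of color $j\ge1$ must take its color-$(j-1)$ preimage along the cycle, since a tree root, having color $d-1$, can never serve as a color-$(j-1)$ preimage for $j\le d-1$. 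This forces the cyclic pattern $0\to1\to\cdots\to d-1\to0$ around the $ds$ cycle nodes, leaving exactly the $d$ rotational phases. Each phase, combined with the forced tree colorings, gives a genuine $d$-splitting subset; the $d$ phases are distinct and exhaustive, so $\sigma(d;T)=d$, in agreement with Theorem~\ref{sigma_for_cycle} in the tree-free case.

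The main obstacle I anticipate is precisely this last step: verifying rigorously that the trees are ``transparent'' to the cycle, namely that assigning color $d-1$ to every root frees all $d$ rotational phases of the cycle while never corrupting the forced tree colorings, and that no non-rotational cycle coloring can occur. Once the root-color lemma and this cycle-rigidity argument are in place, combining them with the generating-function computation completes the proof.
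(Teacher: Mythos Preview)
Your proposal is correct and follows essentially the same approach as the paper. The generating-function computation is identical: apply Lemma~\ref{gf cycle tree}, note $\zeta^{lds}=1$, and invoke Propositions~\ref{tree 1} and~\ref{zeta_hn} to get $g_T(\zeta,\ldots,\zeta^d)=d$. For the combinatorial count, the paper simply asserts $\sigma(d;T)=d$ with a parenthetical reference to Theorem~\ref{sigma_for_cycle}, whereas you supply the detailed coloring argument (leaves forced to color $0$, roots forced to color $d-1$, hence the trees impose no constraint on the cycle's $d$ rotational phases); your version is a legitimate fleshing-out of what the paper leaves implicit.
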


\begin{proof}
Since each tree has a $d$-splitting subset and the central cycle has $ds$ nodes, $\sigma(d;T)=d$ (see Theorem \ref{sigma_for_cycle}). We shall prove that $g_T(\zeta, \zeta^2,\ldots, \zeta^d)=d$. Let $T_1, \ldots, T_k$ be $k$ trees attached to the nodes of the central cycle. By Lemma \ref{gf cycle tree},
$$
g_T(t_1,\ldots,t_d) = \sum_{l=1}^d t_l^{ds} \prod_{j=1}^{k} g_{T_j}(t_l,t_{l+1},\ldots,t_d).
$$
Therefore
$$
g_T(\zeta, \zeta^2,\ldots,\zeta^d) = \sum_{l=1}^d 1 =d,
$$
where the last equation follows by Proposition \ref{zeta_hn} and Proposition \ref{tree 1}. 
\end{proof}

\begin{theorem}
\label{thm 2}
Let $T$ be an endofunction on $[n]$ with a cycle having $(ds + 1)$ nodes and let $T_1,\ldots, T_k$ be $k$ trees attached to the nodes of the cycle such that each of the $(k-1)$ trees $T_1,\ldots,T_{k-1}$ has a $d$-splitting subset, and $T_k$ together with its root node on the cycle, has a $d$-splitting subset. Then
$$
\sigma(d;T)=g_T(\zeta, \zeta^2,\ldots,\zeta^d).
$$
\end{theorem}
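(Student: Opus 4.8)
The plan is to evaluate the two sides of the asserted identity separately and check that they agree; exactly as in Theorem~\ref{thm 1}, the common value will turn out to be a small constant, here $1$. First I would show directly that $\sigma(d;T)=1$ by a forced-construction argument, and then compute $g_T(\zeta,\zeta^2,\ldots,\zeta^d)=1$ from Lemma~\ref{gf cycle tree} together with the root-of-unity evaluations already established in Propositions~\ref{tree 1} and~\ref{zeta_hn} and Corollary~\ref{cor to star}.

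For the count I would use the standard reformulation: since $|W|=n/d$ and $W\cup TW\cup\cdots\cup T^{d-1}W=[n]$ has $n$ elements, the sets $T^iW$ must be pairwise disjoint with each $T^i$ injective on $W$, so a $d$-splitting subset is the same as a partition of $[n]$ into length-$d$ forward orbits $\{w,Tw,\ldots,T^{d-1}w\}$, $w\in W$. Existence is immediate: the unique internal $d$-splittings of $T_1,\ldots,T_{k-1}$, the unique $d$-splitting of $\widetilde{T_k}=T_k\to R$ (which consumes the cycle node $R$), and the $s$ consecutive blocks tiling the remaining $ds$ cycle nodes together form one valid splitting, so $\sigma(d;T)\ge 1$. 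For uniqueness write $R=c_0$ and $c_0\to c_1\to\cdots\to c_{ds}\to c_0$ for the cycle. Since the only exit from $T_k$ is through $c_0$, every orbit that meets $T_k$ and leaves it must contain $c_0$; as $c_0$ lies in a single block, at most one orbit exits $T_k$, and if none did then $T_k$ would split internally, contradicting the hypothesis, so exactly one orbit exits. If that orbit reaches $c_0$ after $\delta$ steps it covers the cycle nodes $c_0,\ldots,c_{d-1-\delta}$, leaving $ds-d+\delta+1$ cycle nodes to be tiled by forward orbits avoiding $c_0$; this is possible only when $d\mid(\delta+1)$, which forces $\delta=d-1$. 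Hence the exiting orbit ends exactly at $c_0$, the coverage of $T_k\cup\{c_0\}$ is the unique $d$-splitting of $\widetilde{T_k}$, the remaining cycle nodes $c_1,\ldots,c_{ds}$ tile uniquely, and each $T_j$ $(j<k)$ splits uniquely by the tree-uniqueness results; therefore $W$ is unique and $\sigma(d;T)=1$.

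For the generating function, Lemma~\ref{gf cycle tree} applied to the single component with $r=ds+1$ cycle nodes gives
$$
g_T(t_1,\ldots,t_d)=\sum_{l=1}^d t_l^{ds+1}\prod_{j=1}^k g_{T_j}(t_l,t_{l+1},\ldots,t_d).
$$
Substituting $t_i=\zeta^i$ and using $\zeta^{ds}=1$ yields $(\zeta^l)^{ds+1}=\zeta^l$, while Propositions~\ref{tree 1} and~\ref{zeta_hn} give $g_{T_j}(\zeta^l,\ldots,\zeta^d)=1$ for every $l$ and every $j<k$, since those trees have $d$-splitting subsets. Thus $g_T(\zeta,\ldots,\zeta^d)=\sum_{l=1}^d \zeta^l\,g_{T_k}(\zeta^l,\ldots,\zeta^d)$. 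I would split off the $l=d$ term: writing $N_k$ for the number of nodes of $T_k$, one has $g_{T_k}(t_d)=t_d^{N_k}$, so $\zeta^d g_{T_k}(\zeta^d)=(\zeta^d)^{N_k+1}=1$ because $\zeta^d=1$. The remaining sum $\sum_{l=1}^{d-1}\zeta^l g_{T_k}(\zeta^l,\ldots,\zeta^d)$ vanishes by Corollary~\ref{cor to star} applied to $T_k$, which does not $d$-split while $\widetilde{T_k}$ does. Hence $g_T(\zeta,\ldots,\zeta^d)=0+1=1=\sigma(d;T)$.

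The routine half is the generating-function evaluation, which drops straight out of the prepared lemmas. The main obstacle is the uniqueness half of the count: making rigorous that exactly one orbit exits $T_k$ and that the cycle-tiling constraint forces it to reach $c_0$ in exactly $d-1$ steps, so that the global splitting restricts to the unique $d$-splitting of $\widetilde{T_k}$. The divisibility bookkeeping $d\mid(\delta+1)$ is precisely what pins down $\delta=d-1$ and rules out orbits that wrap past $c_0$ onto further cycle nodes.
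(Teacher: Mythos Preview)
Your generating-function half is exactly the paper's argument: Lemma~\ref{gf cycle tree} gives the sum, Propositions~\ref{tree 1} and~\ref{zeta_hn} kill the factors from $T_1,\ldots,T_{k-1}$, the $l=d$ term is $1$, and Corollary~\ref{cor to star} annihilates the rest. The paper's treatment of the count $\sigma(d;T)=1$ is a single sentence, so your orbit argument is considerably more detailed than what the paper supplies.

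There is, however, a small gap in your uniqueness argument. When you write ``leaving $ds-d+\delta+1$ cycle nodes to be tiled by forward orbits avoiding $c_0$; this is possible only when $d\mid(\delta+1)$'', you are implicitly assuming that every orbit covering these cycle nodes is a pure cycle orbit of length $d$. But a priori an orbit could start in some $T_j$ with $j<k$, exit through the cycle node $c_{a_j}$ to which $T_j$ is attached, and cover fewer than $d$ cycle nodes, wrecking the divisibility count. You never rule this out, and the later line ``each $T_j$ $(j<k)$ splits uniquely by the tree-uniqueness results'' presupposes exactly this. The fix is short: since $T_j$ admits a $d$-splitting subset, $d$ divides $|T_j|$; if an orbit exited $T_j$ it would contain some $m_j$ nodes of $T_j$ with $1\le m_j\le d-1$, and the remaining $|T_j|-m_j$ nodes would be partitioned into internal $d$-orbits, forcing $d\mid m_j$, a contradiction. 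Once this is inserted before the divisibility step, your argument is complete.
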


\begin{proof}
Since $T_k$ has a $d$-splitting subset when joined with a cycle node, this leaves a unique choice of nodes from the cycle to constitute the $d$-splitting subset. Therefore $\sigma(d;T)=1$. We will show that $g_T(\zeta,\zeta^2,\ldots,\zeta^d)=1$. By Lemma \ref{gf cycle tree}, we have
\begin{align*}
g_T(t_1,\ldots,t_d)= \sum_{j=1}^d t_j^{ds+1} \prod_{i=1}^{k} g_{T_i}(t_j, t_{j+1}, \ldots,t_d).
\end{align*}
By Propositions \ref{tree 1} and \ref{zeta_hn}, $g_{T_i}(\zeta^j, \zeta^{j+1}, \ldots,\zeta^d) =1 ~ \forall ~ 1 \leq j \leq d$ and $1 \leq i \leq k-1$. Therefore,
\begin{align*}
g_T(\zeta,\ldots,\zeta^d) =~& \zeta ~g_{{T_k}}(\zeta,\ldots,\zeta^d) + \zeta^2 ~g_{{T_k}}(\zeta^2,\ldots,\zeta^d) \\& + \cdots + \zeta^{d-1}~ g_{{T_k}}(\zeta^{d-1},\ldots,\zeta^d) + \zeta^d ~g_{{T_k}}(\zeta^d).
\end{align*}
By Corollary \ref{cor to star}, $g_T(\zeta,\ldots,\zeta^d) = \zeta^d ~g_{{T_k}}(\zeta^d) = (\zeta^d)^{l+1} =1$,
where $l$ is the number of nodes in the tree ${T_k}$.
\end{proof}

\begin{remark}
We will call the type of endofunctions described in Theorem \ref{thm 1} {\it endofunctions of Type $1$}, while those described in Theorem \ref{thm 2} as {\it endofunctions of Type $2$}.
\end{remark}
The next corollary generalizes Theorems \ref{thm 1} and \ref{thm 2}. It shows that the result holds for a more general class of endofunctions.
\begin{corollary}
\label{cor}
Let $T$ be an endofunction on $[n]$ consisting of $k$ connected components $T_1,\ldots, T_k$ of size $dm_1,\ldots, dm_k$ respectively, such that $m_1+ \cdots + m_k=n/d$ and each $T_i$ is either of Type $1$ or of Type $2$. Then
$$
\sigma(d;T)=g_T(\zeta, \zeta^2,\ldots,\zeta^d).
$$
\end{corollary}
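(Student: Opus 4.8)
The plan is to exploit the fact that both sides of the asserted identity are multiplicative over connected components, so that the corollary reduces to the single-component cases already handled by Theorem \ref{thm 1} and Theorem \ref{thm 2}.

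\textbf{Step 1: Multiplicativity of $\sigma$.} First I would show $\sigma(d;T)=\prod_{i=1}^k \sigma(d;T_i)$. Let $V_i$ denote the vertex set of the $i$-th component, so $|V_i|=dm_i$ and $[n]=\bigsqcup_{i=1}^k V_i$. Given any $d$-splitting subset $W$ of $[n]$, set $W_i=W\cap V_i$. Since $T$ maps each component into itself, $T^jW_i\subseteq V_i$ for all $j$, and the global covering condition $W\cup TW\cup\cdots\cup T^{d-1}W=[n]$ restricts componentwise to $W_i\cup TW_i\cup\cdots\cup T^{d-1}W_i=V_i$. Because $V_i$ is covered by $d$ translates of $W_i$, we get $dm_i=|V_i|\le d\,|W_i|$, hence $|W_i|\ge m_i$. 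On the other hand $\sum_i|W_i|=|W|=n/d=\sum_i m_i$, which forces $|W_i|=m_i$ for every $i$; thus each $W_i$ is a $d$-splitting subset of $T_i$. Conversely, any tuple of $d$-splitting subsets $W_i$ of the components assembles to $W=\bigsqcup_i W_i$, a $d$-splitting subset of $T$ of cardinality $\sum_i m_i=n/d$. This bijection gives $\sigma(d;T)=\prod_{i=1}^k\sigma(d;T_i)$.

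\textbf{Step 2: Multiplicativity of the generating function.} Lemma \ref{gf cycle tree} already expresses $g_T$ as a product over connected components, so $g_T(t_1,\ldots,t_d)=\prod_{i=1}^k g_{T_i}(t_1,\ldots,t_d)$. Evaluating at $(\zeta,\zeta^2,\ldots,\zeta^d)$ yields $g_T(\zeta,\zeta^2,\ldots,\zeta^d)=\prod_{i=1}^k g_{T_i}(\zeta,\zeta^2,\ldots,\zeta^d)$.

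\textbf{Step 3: Assembling.} Since each $T_i$ is of Type $1$ or Type $2$, Theorem \ref{thm 1} and Theorem \ref{thm 2} give $g_{T_i}(\zeta,\zeta^2,\ldots,\zeta^d)=\sigma(d;T_i)$ for every $i$. Multiplying over $i$ and combining with Steps 1 and 2 gives $\sigma(d;T)=\prod_i\sigma(d;T_i)=\prod_i g_{T_i}(\zeta,\zeta^2,\ldots,\zeta^d)=g_T(\zeta,\zeta^2,\ldots,\zeta^d)$, as required.

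The main obstacle, though it is mild, lies in Step 1: verifying that the single global cardinality constraint $|W|=n/d$ distributes exactly as $|W_i|=m_i$ across the components. This is precisely the pigeonhole argument above, using that a union of $d$ translates of $W_i$ can cover at most $d\,|W_i|$ nodes. Once this is in place, both quantities are genuinely multiplicative, and the remainder is bookkeeping built on the already-established Type $1$ and Type $2$ theorems.
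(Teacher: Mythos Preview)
Your proof is correct and follows essentially the same approach as the paper: both arguments reduce the corollary to the single-component Theorems \ref{thm 1} and \ref{thm 2} by observing that $g_T$ and $\sigma(d;T)$ are multiplicative over connected components. The only difference is that you spell out the pigeonhole argument in Step 1 justifying $\sigma(d;T)=\prod_i\sigma(d;T_i)$, whereas the paper simply asserts this multiplicativity from the hypothesis $m_1+\cdots+m_k=n/d$.
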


\begin{proof}
Since each $T_i$ is independent of $T_j$ for $i \neq j$, we have 
$$
g_T(t_1,\ldots, t_d) = \prod_{i=1}^k g_{T_i}(t_1,\ldots,t_d).
$$
Also, by Theorem \ref{thm 1} and Theorem \ref{thm 2},
$$
\sigma(d;T_i)=g_{T_i}(\zeta, \zeta^2,\ldots,\zeta^d) ~ \forall ~ 1 \leq i \leq k.
$$
As $m_1+ \cdots + m_k=n/d$, we have that
$$
\sigma(d;T)= \prod_{i=1}^k \sigma(d;T_i),
$$
and the result follows.
\end{proof}

We remark that if the endofunction $T$ is neither of type $1$ nor of type $2$, then
$
\sigma(d;T)
$
may not be equal to $ g_T(\zeta, \zeta^2,\ldots,\zeta^d)$.
The following is an example of an endofunction $T$ with $3k+2$ nodes in the cycle and some trees feeding into the cycle, but
$
\sigma(d;T) \neq g_T(\zeta, \zeta^2,\ldots,\zeta^d).
$
\begin{example}
Consider the endofunction $T:[6] \rightarrow [6]$ defined as $1 \rightarrow 2$, $2 \rightarrow 3$, $3 \rightarrow 4$, $4 \rightarrow 5$, $5 \rightarrow 6$, and $6 \rightarrow 2$  as shown in Fig. \ref{fig 3}.
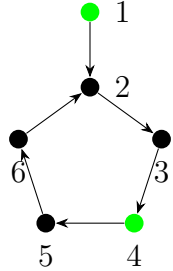
\begin{figure}[h]
\caption{An endofunction on $[6]$.}
\label{fig 3}
  \begin{center}
    \begin{minipage}{\textwidth}
      \centering
      \begin{tikzpicture}
        [scale=1,every node/.style={circle,fill=black},inner sep=2.5pt, minimum size=6pt]
\node[style={circle,fill=green},label=right:$1$] (1) at (0,2) {};
        \node[label=right:$2$] (2) at (0,1) {};
        \node[label=below:$3$] (3) at (0.9510,0.3090) {};
        \node[style={circle,fill=green},label=below:$4$] (4) at (0.5877,-0.8090) {};
        \node[label=below:$5$] (5) at (-0.5877,-0.8090) {};
        \node[label=below:$6$] (6) at (-0.9510,0.3090) {};
\draw[-{Stealth[slant=0]}]
        (1)  to  (2);
        \draw[-{Stealth[slant=0]}]
        (2)  to  (3);
        \draw[-{Stealth[slant=0]}]
        (4) to  (5);
 \draw[-{Stealth[slant=0]}]
        (5) to (6);
 \draw[-{Stealth[slant=0]}]
        (3)  to  (4);
 \draw[-{Stealth[slant=0]}]
        (6)  to  (2);
      \end{tikzpicture}
\end{minipage}
  \end{center}
\end{figure}
Here the central cycle has $3k+2$ nodes $(k=1)$, and a tree is attached to a node of the cycle. Let $d=3$, then $W=\{ 1, 4 \}$ forms a $3$-splitting subset. We may easily verify that $W$ is the only $3$-splitting subset. But $g_T (t_1, t_2, t_3)= {t_1}^{5}(t_1+t_2+t_3) + {t_2}^{5}(t_2+t_3) + {t_3}^{5}(t_3)$ and $g_T(\omega, \omega^2, \omega^3)= 2+ \omega \neq 1 = \sigma(3;T)$, where $\omega$ is the primitive third root of unity.
\end{example}

\section{Acknowledgements}   
The author sincerely thanks Amritanshu Prasad for several helpful discussions. She extends thanks to Samrith Ram for his guidance. Research support from CSIR, India, is gratefully acknowledged. This work was done when the author was a visiting scholar at The Institute of Mathematical Sciences (IMSc), Chennai, India.




\end{document}